\def\ps@firstpage{\ps@plain
  \def\@oddhead{\article@logo\hss}
}
\def\article@logo{\vbox to\headheight{\@parboxrestore \small
\rightline{\href{http://link.springer.com/book/10.2991/978-94-6239-109-3}{The Inverse Problem of the Calculus of Variations}}
\rightline{\href{http://link.springer.com/bookseries/13378}{Atlantis Studies in Variational Geometry}}
\rightline{Vol.~2 (2015) p.~75--102}
   \par\vss
}}
\newtheorem{proposition}{Proposition}
\theoremstyle{remark}
\newtheorem{remark}{Remark}
\let\cal\mathcal
\font\BBB=msbm10    
\font\ssb=cmssbx10 
\font\sssm=cmss8 
\font\bfrak=eufb10 
\newcommand{\eisf}[1]{\mbox{\sssm#1}}
\newdimen\colsep\colsep=\arraycolsep
\newcommand{\soft}[1]{#1{\kern-1.3pt}'{\kern-.3pt}}
\newcommand{\softL}{L{\kern-2pt}'}
\newcommand{\ms}{\kern.3em}
\newcommand{\nsp}{\mspace{-9mu}}
\newcommand{\RN}[1]{\uppercase\expandafter{\romannumeral#1}}
\newcommand{\NO}[1]{N$^{\rm o}$\kern.1em #1}
\newbox\stacka\setbox\stacka=\hbox{$\stackrel{\scriptstyle\downarrow}{{\mbox{\ssb A}}}$}
\edef\stack{\vrule width 0pt depth 0pt height \ht\stacka}
\def\sa{\copy\stacka\ht\stacka}
\def\sa{\copy\stacka}
\let\bf\bm
\newcommand{\bmat}[1]{{\bm{#1}}}
\newcommand{\A}{\mbox{\ssb A}}
\newcommand{\B}{\mbox{\ssb B}}
\newcommand{\CC}{\mbox{\ssb c}}
\newcommand{\V}{\mbox{\ssb v}}
\newcommand{\K}{\mbox{\ssb k}}
\newcommand{\Zero}{\mbox{\ssb 0}}
\newcommand{\bcdot}{\mbox{\boldmath$\cdot$}}
\newcommand{\bpartial}{{\bm\partial}}
\newcommand{\ubm}[1]{\mbox{\unboldmath$#1$}}
\newcommand{\bkey}[1]{{\boldsymbol#1}}
\newbox\abox\setbox\abox=\hbox{\ssb A}
\newbox\Elocity\setbox\Elocity=\hbox{\ssb v}
\newbox\PrimE\setbox\PrimE=\hbox{{\ssb v}$^{\prime}$}
\newbox\elocity\setbox\elocity=\hbox{\sssm v}
\newbox\Prime\setbox\Prime=\hbox{{\sssm v}$^{\scriptscriptstyle\prime}$}
\def\WW{\copy\PrimE}
\def\vv{\copy\elocity}
\def\ww{\copy\Prime}
\newcommand{\SSS}[1]{{\scriptscriptstyle#1}}
\newcommand{\au}{\|\bu\|}
\newcommand{\bfe}{\hbox{\ssb e}}
\newcommand{\eps}{\underline{\epsilon}\,}
\newcommand{\bvarepsilon}{{\boldsymbol\varepsilon}}
\newcommand{\bepsilon}{{\underline{\boldsymbol{\epsilon}}}}
\newcommand{\x}{{\sf x}}
\newcommand{\sv}{{\sf v}}
\newcommand{\sE}{{\sf E}}
\newcommand{\se}{{\sf e}}
\newcommand{\sA}{{\sf A}}
\newcommand{\sB}{{\sf B}}
\newcommand{\sk}{{\sf k}}
\newcommand{\bu}{{\bm u}}
\newcommand{\R}{\mbox{\BBB R}}
\newcommand{\BBN}{\mbox{\BBB N}}
\newcommand{\Dx}{{D}_{\x}}
\newcommand{\Du}{{D}_{\sv}}
\newcommand{\Dw}{{D}_{\sw}}
\newcommand{\bw}{\mbox{\ssb w}}
\newcommand{\sw}{{\sf w}}
\newcommand{\M}{\mbox{\ssb M}}
\newcommand{\sM}{{\sf M}}
\newcommand{\bb}{\mbox{\ssb b}}
\newcommand{\Sb}{{\sf b}}
\newcommand{\bd}{\,\bkey.\,}
\newcommand{\lp}[2]{\partial_{_{_{_{{{\scriptstyle #1}}}}}}{\!}_{#2}{}}
\newcommand{\bp}[1]{\bpartial_{#1}}
\newcommand{\dw}{{\sw'}}
\newcommand{\ddw}{{\sw''}}
\newcommand{\bE}{\mbox{\ssb E}}
\newcommand{\m}{\mbox{\ssb m}}
\newcommand{\N}{\mbox{\ssb N}}
\newcommand{\PP}{\mbox{\ssb P}}
\newcommand{\Q}{\mbox{\ssb Q}}
\newcommand{\pl}[6]{L
_{_{_{_{\scriptstyle #1}}}}{\!}_{#2}{}_{_{_{_{\scriptstyle #3}}}}{\!}_{#4}{}_{_{_{_{\scriptstyle #5}}}}{\!}_{#6}{}
 }
\newcommand{\X}{\mbox{\ssb x}}
\newcommand{\W}{\bm{\varOmega}}
\newcommand{\fa}{\mbox{\bfrak a}}
\newcommand{\bP}{\boldsymbol{\pi}}
\newcommand{\w}{\boldsymbol{\omega}}
\newcommand{\bdot}{\boldsymbol{\,.\,}}
\begin{document}
\author{{Roman~Ya. Matsyuk}}
\title[The Relativistic Third Order Dynamics]{Inverse Variational Problem and Symmetry in Action: The Relativistic Third Order Dynamics}
\address{\href{http://www.iapmm.lviv.ua/12/eng/files/st_files/matsyuk.htm}{Institute for Applied Problems in Mechanics \& Mathematics}
\\15 Dudayev St., 79005 Lviv, Ukraine \textit{and} \href{http://www.lepageri.eu/researchers/roman-matsyuk}{Lepage Research Institute}, Czech Republic}
\email{matsyuk@lms.lviv.ua, romko.b.m@gmail.com}
\subjclass[2010]{53B50, 49S05, 49N45, 70G65, 70H50, 83A05, 70E99}
\keywords{Higher order mechanics; Inverse problem of the calculus of variations; Symmetry; Special Relativity; Classical spin}
\begin{abstract}{Tools of the intrinsic analysis on manifolds, helpful in solving the invariant inverse problem of the calculus of variations are being presented comprising a combined approach which consists in the simultaneous imposition of symmetry principles and the inverse variational problem considerations in terms of vector-valued differential forms. In three-dimensional space-time we obtain a unique (covector) third-order Poincar\'e-invariant variational equation, which then is identified with the motion of a free relativistic top in flat three-dimensional space-time.}
\end{abstract}
\maketitle
\tableofcontents
\section{Introduction}
For the past four decades the subject of higher-order mechanics was
revisited by many authors from the point of view of global analysis,
including certain features of intrinsic differential geometry
(see monographs \cite{Krup}, \cite{Leon}, \cite{Saund} and the references therein).
The equations of motion and the Lagrange function of higher order analytical mechanics contain higher order derivatives. Such mechanics is often referred to as generalized mechanics or Ostrogradsky's mechanics. Ostrogradsky
was interested in variational principles with higher order derivatives present in the integrand~\cite{Ostrogradsky}.
As far as we know, he was the first to  introduce a hierarchy of generalized momenta corresponding to the sequence of higher order velocities in the Lagrange function (see~\cite{Leon} and the reference therein).
In the case when no transition from the Lagrangian framework to a generalized canonical framework involving generalized Ostrogradky's momenta is made, one may prefer to speak about the higher order dynamics.

The more intriguing fact is that the
investigations on the application of Ostrogradsky's mechanics to
real physical models have not been abandoned since the pioneer works by
Mathisson, Bopp, Weyssenhoff, Raabe, and H\"onl (see
references \cite{Bopp}, \cite{Hoenl}, \cite{Math}, \cite{Weyss}). Most of the applications consider models of test particles
endowed with inner degrees of freedom~\cite{Tulcz1}, \cite{Riewe}, \cite{Plyush}, \cite{Nest}, \cite{Leiko}, \cite{Arodz}, \cite{Ners}, \cite{Arreaga} or models that put the notion of the acceleration into the framework of general
differential-geometric structure of the extended configuration space of
the particle~\cite{Scarp}. One interesting example of how the derivatives
of the third order may appear in the equations of motion of a test particle is
provided by the Mathisson--Papapetrou equations
\begin{eqnarray}
  &&  \frac{D}{d\zeta}\left(m_{\SSS0}\frac{u^\alpha}{\au}+\frac{u_\gamma
}{\au^2}\frac{D}{d\zeta}S^{\alpha\gamma}\right) = {\cal F}^\alpha,
\label{MPu}
\\
&& \frac{D}{d\zeta}S^{\alpha\beta} = \frac{1}{\au^2}\left(u^\beta
u_\gamma\frac{D}{d\zeta}S^{\alpha\gamma}-u^\alpha
u_\gamma\frac{D}{d\zeta}S^{\beta \gamma}\right) \label{MPS}
\end{eqnarray}
together with the supplementary condition
\begin{equation}
u_\gamma S^{\alpha\gamma}=0.
\label{Pir}
\end{equation}
It is immediately clear that the second term in (\ref{MPu}) may produce
the derivatives of the third order of the space-time variables $x^\alpha$ as
soon as one replaces
$u_\gamma{\strut DS^{\alpha\gamma}}/{d\zeta}$ with $\displaystyle -
S^{\alpha\gamma}{\strut Du^\gamma}/{d\zeta}$ in virtue of (\ref{Pir}). Such a substitution in fact means differentiating equation (\ref{Pir}). However,
the system of equations thus obtained will not possess any additional
solutions comparing to that of \eqref{MPu}--\eqref{Pir} as long as one keeps the original constraint (\ref{Pir}). The system of equations \eqref{MPu}--\eqref{Pir} is discussed in~\cite{PlyaPhRev}.
The right-hand side of equation~(\ref{MPu}) vanishes in the absence of gravitation.

In this contribution we intend to present certain tools of the intrinsic analysis on manifolds that may appear helpful in solving the invariant inverse problem of the calculus of variations.
The main goal is to introduce a
combined approach consisting in the simultaneous imposition of symmetry principles and
the inverse variational problem considerations in terms
of vector-valued differential forms. Besides, a simple algorithm for an invertible transition from an autonomous variational problem to the variational problem in a parametric form is established.
In the special case
of three-dimensional space-time, we shall successfully follow some
prescriptions for obtaining third-order Poincar\'e-invariant variational
equations up to the very final solution, thus discovering the unique possible one.
The example
shows nonexistence of a globally and intrinsically defined
Lagrangian for Poincar\'e-invariant and well-defined unique
variational equation in the considered case. The model will then be identified with the motion of a free relativistic top in the flat three-dimensional space-time by means
of comparing it to equations \eqref{MPu}--\eqref{Pir} when
$R^\alpha{}_{\beta\delta\gamma}=0$ (and, consequently, ${\cal F}^\alpha=0$).

Our example exposes the following features of the inverse problem of the calculus ofvariations:
\begin{itemize}
\item Nonexistence (in our case) of a well-defined invariant
Lagrangian for intrinsically well-defined and Poincar\'e-invariant
\index{Poincar\'e! symmetry}\index{symmetry! Poincar\'e}
equations of motion.
\item
Existence of a finite set of (degenerate, in our case) Lagrangians, each producing the aforementioned equations.
\item
Failure of a sum of Lagrangians to be a Lagrangian of minimal order; in our case, this is caused by the dependence of different Lagrangians on different sets of second-order derivatives.
\item
Invariance, in our case, of this set of Lagrangians, but not the Lagrangians themselves, with respect to permutations of the axes of the Lorentz frame.
\end{itemize}
In order to simplify the exposition, some of the technical details are omitted in Sections~\ref{Section2}--\ref{Section5}. These details are elucidated  in Section~\ref{Appendix}.

\section{Homogeneous Form and Pa\-ra\-me\-t\-ric Invariance\label{Section2}}
Presentation of the equation of motion in the so-called `manifestly
covariant form' stipulates introducing the space of
Ehresmann's velocities over the configuration manifold $M$ of the particle,
$T^kM=\{x^\alpha,\dot x^\alpha,\ddot x^\alpha,\dots, x_{(k)}^\alpha\}$. In the following
the notations $u^\alpha$, $\dot u^\alpha$, $\ddot u^\alpha$,
$u_{(r)}^\alpha$ will frequently be used in place of $\dot x^\alpha,
\ddot x^\alpha, x_{(3)}^\alpha,\dotsc, x_{(r+\SSS1)}^\alpha$, and also
$x_{(\SSS0)}^\alpha$ sometimes will denote the quantity $x^\alpha$. We call a mapping $\zeta\mapsto x^\alpha(\zeta)$ the {\it parametrized\/}
(by means of
$\zeta$) {\it world line\/}. Its image in $M$ will be called
the {\it non-parameterized world
line.} The term `world line' is borrowed from the relativity theory, and the Reader may replace it with `path' or `curve'.  As we are interested in a variational equation (of order
$s$) that should describe the non-parameterized world lines of the
particle,
\begin{equation}
{\cal E}_\alpha\left(
x^\alpha,u^\alpha,\dot u^\alpha,\ddot
u^\alpha,\dots, u_{(s-\SSS1)}^\alpha
\right)=0,
\label{f}
\end{equation}
the Lagrange function ${\cal L}$  has to satisfy the Zermelo conditions,
which in our case of only derivatives up to the second order present in
${\cal L}$ read
\begin{align*}
u^\beta\frac{\partial}{\partial u^\beta}{\cal L}+2\dot
u^\beta\frac{\partial}{\partial \dot u^\beta}{\cal L} &= {\cal L},
\\
u^\beta\frac{\partial}{\partial \dot u^\beta}{\cal L} &= 0.
\end{align*}

In this approach the independent variable $\zeta$ (called the {\it parameter along
the world line}\/) is not included into the configuration manifold $M$. Thus
the space $T^kM$ is the appropriate candidate for the role of the
underlying manifold on which the variational problem in the autonomous
form should be posed. We may include the parameter $\zeta$ into the
configuration manifold by introducing the trivial fiber manifold
$\R\times M\to\R$, $\zeta\in\R$, and taking into consideration its
$k^{\rm th}$-order prolongation, $J^k(\R,M)$, i.e. the space of the $k^{\rm th}$-order jets of local cross-sections of
$Y=\R\times M$ over $\R$. Each such a cross-section of $Y$ is nothing but
the graph in $\R\times M$ of some local curve $x^\alpha(\zeta)$ in $M$.
For each $r\in\BBN$ there exists a natural projection
\begin{equation}
p_{\SSS0}^r:J^r(\R,M)\to T^rM,
\label{p0r}
\end{equation}
introduced as follows. The manifold $T^rM$ consists of the derivatives up to the
$r^{\rm th}$-order of curves $x^\alpha(\zeta)$ in $M$, evaluated at
$0\in\R$. For every $\tau\in\R$, denote by the same character $\tau$
the mapping $\zeta\mapsto\zeta+\tau$ of
$\R$ onto itself. Then the projection reads
\begin{eqnarray} 
\nonumber &&p_{\SSS0}^r:\biggl(\tau;x^\alpha(\tau),\frac{d}{d\zeta}x^\alpha(\tau),
\frac{d^2}{d\zeta^2}x^\alpha(\tau),\dots,\frac{d^r}{d\zeta^r}x^\alpha(\tau)\biggr)\nonumber
\\
&& {}\mapsto
\biggl(\,(x^\alpha\circ\tau)\,(0), \frac{d}{d\zeta}\,(x^\alpha\circ\tau)\,(0),\frac{d^2}{d\zeta^2}\,(x^\alpha\circ\tau)\,(0), \dots,
 \frac{d^r}{d\zeta^r}\,(x^\alpha\circ\tau)\,(0)\biggr).
\label{p0}
\end{eqnarray}

By means of projection (\ref{p0r}), every Lagrange function
${\cal L}$ initially defined on $T^kM$ may be pulled back to the manifold
$J^k(\R,M)$ and thus defines there the function ${\cal L}_{{\SSS0}}$ by the
formula ${\cal L}_{{\SSS0}}=\nobreak{\cal L}\circ p_{\SSS0}^k$. We say that the
differential form
\begin{equation}
\lambda={\cal L}_{\SSS0}d\zeta
\label{lambda0}
\end{equation}
constitutes a variational problem in the extended parametric form because in
the construction of the new configuration manifold $\R\times M$ the
independent variable $\zeta$ is artificially doubled. Nevertheless, we shall need
this construction later.

Let us return to the variational problem on the manifold $T^kM$ specified by a given Lagrange function ${\cal L}$. The moment we impose the
Zermelo conditions, the problem becomes degenerate. One can avoid this degeneracy by reducing the number of velocities at
the cost of losing the `homogeneity' property of the equation~(\ref{f}) as follows. Consider
some way of separating the variables $x^\alpha\in M$ into $t\in\R$
and $\x^i\in Q$, $\dim Q=\dim M-1$, thus making $M$ into some fibration,
$M\approx\R\times Q$, over $\R$. The manifold of jets $J^r(\R,Q)$
provides a local representation of what is known as the manifold
$C^r(M,1)$ of $r$-contact one-dimensional submanifolds of $M$.
Intrinsically defined global projection of non-zero elements of $T^rM$
onto the manifold $C^r(M,1)$, in this local and `non-covariant' representation is denoted by
\begin{equation}
\wp^r:T^rM\setminus\{0\}\to J^r(\R,Q),
\label{wp}
\end{equation}
and in the third order is implicitly defined by the following formulae,
where the local coordinates in $J^r(\R,Q)$ are denoted by $(t; \x^i,
\sv^i, {\sv'}^i, {\sv''}^i, \dots, \sv_{(r-\SSS1)}^i)$ with $\sv_{(\SSS0)}^i$ and $\sv_{(-\SSS1)}^i$
representing $\sv^i$ and $\x^i$:
\begin{align}
\nonumber
\dot t\, \sv^i  &= u^i
\\
(\dot t)^3 {\sv'}^i  &=  \dot t \dot u^i - \ddot t u^i
\label{calp}
\\
\nonumber
(\dot t)^5 {\sv''}^i  &=  (\dot t)^2 \ddot u^i - 3\, \dot t\, \ddot t\, \dot
u^i + \left[3(\ddot t)^2 -\dot t\; \dddot t\right] u^i.
\end{align}

There does not exist any well-defined projection from the manifold
$C^r(M,1)$ onto the space of independent variable $\R$, so an expression
\begin{equation}
\Lambda=L\big(t; \x^i, \sv^i, {\sv'}^i, {\sv''}^i, \dots, \sv_{(k-
\SSS1)}^i \big)dt
\label{L}
\end{equation}
will depend on the selection of the local representation $M\approx\R\times
Q$. We say that two different expressions of type~(\ref{L}) define the same variational problem in the parametric form if their difference
expands into nothing but the pull-backs to $C^k(M,1)$ of the
contact forms
\begin{equation}
\theta^i=d\x^i-\sv^idt
\label{Theta}
\end{equation}
that live on the manifold $C^{\SSS1}(M,1)$.
These differential forms obviously vanish along the jet of any curve
$\R\to Q$.

Let the components of the variational equation
\begin{equation}
\sE_i = 0
\label{fe}
\end{equation}
corresponding to the Lagrangian (\ref{L}) be treated as the components of the vector-valued one-form
\begin{equation}
\bfe=\big\{\sE_idt\big\}.
\label{e}
\end{equation}

We intend to give a `homogeneous' description to (\ref{e}) and
(\ref{L}) in terms of some objects defined on $T^sM$ and $T^kM$,
respectively. But we cannot apply directly the pull-back operation to the
Lagrangian~(\ref{L}) because the pull-back of one-form is a one-form
again, and what we need on $T^kM$ is a Lagrange {\em function} and not a
differential form. However, it is possible to pull (\ref{L}) ultimately all the way
back to the manifold $J^k(\R,M)$ using the composition of projections (\ref{p0r}) and (\ref{wp}),
\begin{equation}
p^k=\wp^k\circ p_{\SSS0}^k\,.
\label{p}
\end{equation}
This way we obtain a
differential form $\big(L\circ p^k\big)dt$. But what we do desire is a
form that solely involves $d\zeta$ (i.e., the semi-basic with respect
to the projection $J^k(\R,M)\to\R$). Fortunately, the two differential
forms, $dt$ and $\dot t d\zeta$, differ at most by the
contact form
\begin{equation}
\vartheta=dt-\dot t d\zeta.
\label{theta}
\end{equation}
Now, we recall that equivalent Lagrangians
that are of the same structure (\ref{L}) differ by multiplies of the contact
forms (\ref{Theta}). By
(\ref{p0}) and (\ref{calp}), the pull-backs of the
contact forms (\ref{Theta}) are superpositions of the contact forms (\ref{theta})
and
\begin{equation}
\vartheta^i=dx^i-u^i d\zeta,
\label{thetas}
\end{equation}
that is,
\[
p^{\SSS1*}\theta^i= dx^i - \big(\sv^i\circ p^{\SSS1}\big)dt = \vartheta^i -
\big(\sv^i\circ p^{\SSS1}\big)\vartheta.
\]
Thus, every variational problem posed on $J^k(\R,Q)$ and represented by (\ref{L}) transforms into an equivalent variational problem on $J^k(\R,M)$ characterized by
\begin{equation}
\lambda=\big(L\circ p^k\big)\dot t d\zeta.
\label{lambda}
\end{equation}
But the Lagrange function of this new variational
problem,
\begin{equation}
{\cal L}_{\SSS0}=\big(L\circ p^k\big)\dot t,
\label{L0}
\end{equation}
does not depend on the parameter $\zeta$, and, consequently, may be
thought of as a function defined on $T^kM$.

We prefer to cast the variational equation (of order $s\leq 2k$)
generated by the Lagrangian (\ref{lambda}) into the framework of
vector-valued exterior differential systems theory by introducing the
following vector-valued differential one-form
\begin{equation}
\bvarepsilon={\cal E}_\alpha \big(x^\alpha, \dot x^\alpha, \dots,
x_{(s)}^\alpha \big)d\zeta
\label{epsilon}
\end{equation}
defined on the manifold $J^s(\R,M)$.
The expressions ${\cal E}_\alpha \big(x^\alpha, \dot x^\alpha, \dots,
x_{(s)}^\alpha \big)$ in (\ref{epsilon}) may also be interpreted as the quantities defined on $T^sM$, similar to the interpretation of ${\cal L}_{\SSS0}$. Summarizing, we obtain following statement:
\begin{proposition}\label{homogen}
Let the differential form \eqref{e} correspond to the variational
equation with the Lagrangian \eqref{L}. Then the quantities
\begin{equation}
{\cal E}_\alpha = \left\{-u^i\sE_i, \dot t \,\sE_i \right\}
\label{CalE}
\end{equation}
are the variational expressions for the Lagrange function \eqref{L0}.
\end{proposition}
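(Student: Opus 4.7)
The plan is to exploit the fact that the two Lagrangians agree on holonomic jets. From (\ref{theta}) and the construction of $\mathcal{L}_{\SSS0}$,
\[
\mathcal{L}_{\SSS0}\, d\zeta - L\, dt = -(L\circ p^k)\,\vartheta,
\]
and the contact form $\vartheta$ vanishes along the $k$-jet of any curve $\zeta\mapsto(t(\zeta),x^i(\zeta))$. Hence the two actions coincide on every such curve, and I will compute the first variation of this common integral in two ways, matching coefficients of independent variations to read off the identities (\ref{CalE}).

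\textbf{Key step.} Fix a curve $\gamma$ with $\dot t\neq 0$ and a compactly supported variation $(\delta t,\delta x^i)=(\tau(\zeta),\eta^i(\zeta))$. By definition of the Euler--Lagrange expressions for $\mathcal{L}_{\SSS0}$,
\[
\delta\int\mathcal{L}_{\SSS0}\, d\zeta=\int\bigl(\mathcal{E}_0\,\tau+\mathcal{E}_i\,\eta^i\bigr)\, d\zeta.
\]
Writing the same curve as $t\mapsto X^i(t)$, the identity $X^i_s\bigl(t(\zeta)+s\tau(\zeta)\bigr)=x^i(\zeta)+s\eta^i(\zeta)$, differentiated in $s$ at $s=0$, produces the induced vertical variation at fixed $t$:
\[
\tilde\eta^i\bigl(t(\zeta)\bigr)=\eta^i(\zeta)-\sv^i(\zeta)\,\tau(\zeta),\qquad \sv^i=u^i/\dot t.
\]
Since the non-parametric action agrees with the parametric one along $\gamma$, its variation reads
\[
\delta\int L\, dt=\int\sE_i\,\tilde\eta^i\, dt=\int\sE_i\bigl(\eta^i-\sv^i\tau\bigr)\dot t\, d\zeta.
\]

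\textbf{Conclusion and obstacle.} Equating the two expressions for $\delta S$ and letting $\tau$ and $\eta^i$ vary independently and arbitrarily, the fundamental lemma of the calculus of variations yields $\mathcal{E}_i=\dot t\,\sE_i$ and $\mathcal{E}_0=-\sv^i\dot t\,\sE_i=-u^i\sE_i$, exactly as asserted in (\ref{CalE}). The only delicate point is to justify the induced-variation formula $\tilde\eta^i=\eta^i-\sv^i\tau$ together with its higher-order analogues consistent with (\ref{calp}), i.e.\ to show that $p^k$ intertwines vertical variations on source and target. This is a routine chain-rule verification; moreover, compact support of $(\tau,\eta^i)$ in $\zeta$ automatically transfers to compact support of $\tilde\eta^i$ in $t$ (because $\dot t\neq 0$), so no boundary terms obstruct the passage from the integral identity to the pointwise one. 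As a cross-check, the result is consistent with the Noether identity $\dot t\,\mathcal{E}_0+u^i\mathcal{E}_i=0$ forced by the manifest reparametrization invariance of $\mathcal{L}_{\SSS0}\,d\zeta$.
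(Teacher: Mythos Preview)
Your argument is correct. The paper itself does not supply a proof of this proposition: after stating it, the text merely records the consequences and then writes ``For more details, we refer the Reader to paper~\cite{DGA2001}.'' So there is nothing in the present paper to compare against at the level of a detailed derivation.

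Your route---identify the two actions on holonomic curves, compute the first variation of the common functional once with respect to the parametrized data $(t(\zeta),x^i(\zeta))$ and once with respect to the graph $X^i(t)$, and then match coefficients via the fundamental lemma---is the standard and efficient way to obtain~(\ref{CalE}). The induced-variation formula $\tilde\eta^i=\eta^i-\sv^i\tau$ is exactly what the implicit relation $X^i_s(t(\zeta)+s\tau(\zeta))=x^i(\zeta)+s\eta^i(\zeta)$ yields at $s=0$, and once the two actions agree as functionals of curves, no separate verification of the higher-order prolongation formulae from~(\ref{calp}) is needed: the integration by parts that produces the Euler--Lagrange expressions sees only the zeroth-order variations, and these are precisely $(\tau,\eta^i)$ on one side and $\tilde\eta^i$ on the other. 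The Noether cross-check $\dot t\,\mathcal{E}_0+u^i\mathcal{E}_i=0$ is a nice confirmation and is indeed the infinitesimal shadow of the Zermelo conditions the paper alludes to immediately after the proposition.
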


In this case the $s^{\rm th}$-order equation (\ref{f}) gives a
homogeneous representation of the same non-parameterized world lines of a particle
governed by the variational problem
(\ref{L0}), as does the equation (\ref{fe}) with the Lagrangian given by
(\ref{L}). Besides, ${\cal L}_{\SSS0}$ satisfies the Zermelo
conditions. For more details, we refer the Reader to paper~\cite{DGA2001}.

\section{The Criterion of Variationality}
\index{equation! Poincar\'e-invariant}\index{Poincar\'e! invariant equation}
Our main intention is to find a Poincar\'e-invariant ordinary (co-vector)
differential equation of the third order in three-dimensional space-time.
With this goal in mind we organize the expressions $\sE_i$ in (\ref{e})
into a single differential object, the exterior one-form
\begin{equation}
\se_{\SSS0}=\sE_i d\x^i
\label{e0}
\end{equation}
defined on the manifold $J^s(\R,Q)$, so that the vector-valued differential form
(\ref{e}) should now be viewed as a coordinate representation of the
intrinsic differential-geometric object
\begin{equation}
\se=\se_i d\x^i=\sE_i dt \otimes d\x^i = dt \otimes \se_{\SSS0}.
\label{efull}
\end{equation}
The differential form $\se$ constructed this way is an element of the graded module of semi-basic with respect to $\R$ differential forms
on $J^s(\R,Q)$ with values in the bundle of graded algebras $\wedge T^*Q$ of scalar forms on
$TQ$. Of course, due to the dimension of $\R$, only functions
(i.e. semi-basic zero-forms) and semi-basic one-forms (i.e. proportional to $dt$) exist. We also wish to mention
that every (scalar) differential form on $Q$ is naturally treated as a
differential form on $T^rQ$, i.e. as an element of the graded algebra of cross-sections of
$\wedge T^*\big(T^rQ\big)$.

For an arbitrary $s\in\BBN$, let $\Omega_s(Q)$ denote the algebra of (scalar) differential forms on
$T^sQ$ with coefficients depending on $\sv_{r-\SSS1}$,  $r\le s$, and $t\in \R$. It is
possible to develop calculus on $\Omega_s(Q)$ by introducing the
operator of vertical (with respect to $\R$) differential $d_{\sv}$
and the operator of total derivative (or formal time derivative) $D_t$ by setting\[
d_{\sv} f =\frac{\partial f}{\partial \x^i} d\x^i + \frac{\partial
f}{\partial \sv_{(r)}^i} d\sv_{(r)}^i, \quad d_{\sv}^2 = 0,
\]
so that $d_{\sv}\x^i$ and $d_{\sv}\sv_{(r)}^i$ coincide with $d\x^i$ and
$d\sv_{(r)}^i$, respectively, and
\[
D_tf=\frac{\partial f}{\partial t}+ \sv^i \frac{\partial f}{\partial
\x^i}+ \sv_{(r+\SSS1)}^i \frac{\partial f}{\partial \sv_{(r)}^i}, \quad
D_td_{\sv} = d_{\sv}D_t.
\]

There exists a notion of {\it derivation} in a graded algebra endowed with
a generalized commutation rule, as  $\Omega_s(Q)$ is. An operator $D$ is
called a derivation of degree $q$ if for any differential form $\varpi$
of degree $p$ and an arbitrary differential form $w$ the property
\[
D(\varpi \wedge w) = D(\varpi) \wedge w +(-1)^{pq} \varpi \wedge D(w)
\]
holds.
In order to complete the above definitions, it is necessary to require for $d_{\sv}$ to be
a derivation of degree $1$ and for $D_t$ to be a derivation of degree $0$.
Additionally, we need one more derivation of
degree $0$, denoted here as $\iota$ and defined by its action on
functions and one-forms that locally generate the algebra
$\Omega_s(Q)$:
\[
\iota f = 0,\quad \iota d\x^i = 0,\quad \,\iota d\sv^i = d\x^i,\quad
\iota d\sv_{(r)}^i = (r+1)\, d\sv_{(r-\SSS1)}^i.
\]

Let the operator $\deg$ denote the evaluation of degree of a differential form.
The {\em Lagrange differential} \index{Lagrange! differential} $\delta$ is initially introduced by its
action on the elements of $\Omega_s(Q)$,
\begin{equation}\label{matsyuk:delta}
\delta = \left(\deg + \sum_{m=1}^s\frac{(-1)^m}{m!}D_t{}^m \iota^m \right)d_{\sv},
\end{equation}
and next trivially extended to the entire graded module of semi-basic
differential forms on $J^s(\R,Q)$ with values in  $\wedge
T^*\big(T^rQ\big)$ by means of the formulae
\begin{align*}
\delta (\omega_i dt\otimes d\x^i) &= dt\otimes \delta (\omega_i d\x^i),
\\
\delta (\omega^r_i dt\otimes d\sv_{(r)}^i) &= dt\otimes \delta
(\omega^r_i d\sv_{(r)}^i).
\end{align*}

The operator $\delta$ possesses the property $\delta^2=0$ (see \cite{Lawruk} and~\cite{Kolar}).  For the differential geometric objects (\ref{efull}) and (\ref{L})
the following relation holds:
\begin{equation}
\se=\delta\Lambda=dt\otimes\delta L.
\label{edelta}
\end{equation}
Now the criterion for an arbitrary set of expressions $\big\{\sE_i\big\}$
in (\ref{e})  to be variational equations for some Lagrangian reads
\begin{equation}
\delta\se= dt\otimes\delta \se_{\SSS0} =0,
\label{crit}
\end{equation}
with $\se$ constructed from $\big\{\sE_i\big\}$ as in (\ref{e0})
and (\ref{efull}). The differential 2-form~$\delta \se_{\SSS0}$ (or its semi-basic extension~$\delta \se$) is sometimes called the \emph{Helmholtz form}\index{Helmholtz! form}\index{form! Helmholtz} related to the \emph{dynamical form}~$\se$~\cite{Krup}\index{form! dynamical}. An alternative way to say that a differential form~$\se$ is dynamical is to demand that $\,\iota\se=0$.

Of course, one may extend the above constructions to
analogous objects on the manifold $J^s(\R,M)$ in (\ref{p0r}) and
obtain the operator, the Lagrange differential $\delta^{\SSS Y}$ acting
on semi-basic, with respect to $\R$, differential forms on  $J^s(\R,M)$
with values in the bundle $\wedge T^*\big(T^sM\big)$. The operator $\delta^{\SSS Y}$ on the algebra
$\Omega_s(M)$  preserves the sub-algebra of forms
that do not depend on the parameter $\zeta\in\R$. The restriction of
$\delta^{\SSS Y}$ to the algebra of differential forms defined on
$T^sM$ will be denoted by $\delta^{\SSS T}$. It was introduced
in~\cite{Tulcz2}. If in (\ref{lambda0}) the Lagrange function ${\cal L}_{\SSS0}$
does not depend on the parameter $\zeta\in\R$, as is the case of
\eqref{lambda} and \eqref{L0}, rather than applying $\delta^{\SSS Y}$ to the forms $\lambda$
from (\ref{lambda0}) and
\begin{equation}
\varepsilon = \varepsilon_\alpha dx^\alpha ={\cal E}_\alpha d\zeta\otimes
dx^\alpha
\label{epsfull}
\end{equation}
from (\ref{epsilon}), we may apply the restricted operator  $\delta^{\SSS
T}$ to the Lagrange function ${\cal L}_{\SSS0}$ and to the differential
form
\begin{equation}
\varepsilon_{\SSS0} = {\cal E}_\alpha dx^\alpha.
\label{eps0}
\end{equation}
In the case of (\ref{L0}) the criteria $\delta^{\SSS Y}\varepsilon=0$,
\begin{equation}
\delta^{\SSS T}\varepsilon_{\SSS0}=0,
\label{criteps}
\end{equation}
and (\ref{crit}) are equivalent, and the variational equations,
produced by the expressions $\varepsilon=\delta^{\SSS Y}\lambda$ from
\eqref{epsfull} and \eqref{lambda}, $\varepsilon_{\SSS0}=\delta^{\SSS T}{\cal
L}_{\SSS0}$ from (\ref{eps0},~\ref{L0}), and $\se$ from (\ref{edelta}) are
equivalent to (\ref{f}). The expressions (\ref{e}) and (\ref{L}) are not
`generally covariant' whereas (\ref{eps0}) is. But the criterion
(\ref{criteps}) needs to be solved along with Zermelo conditions, whereas
(\ref{crit}) is self-contained.

The presentation of a system of variational expressions
$\big\{\sE_i\big\}$ under the guise of a semi-basic (i.e. in
$dt$ solely) differential form that takes values in the bundle of one-forms
over the configuration manifold $Q$ is quite natural:
\begin{itemize}
\item the Lagrange density (called {\it Lagrangian} in this work) is a
one-form in $dt$ only;
\item the destination of the Euler--Lagrange expressions in fact consists
in evaluating them on the infinitesimal variations, i.e. the vector
fields tangent to the configuration manifold $Q$  along the critical
curve; consequently, the set of $\sE_i$'s constitutes a linear form on the
cross-sections of  $TQ$ with the coefficients depending on higher
derivatives.
\end{itemize}

More details may be found in~\cite{Var}.

\section{The Lepagean Equivalent}
The system of partial differential
equations imposed on $\sE_i$ that arises from (\ref{crit}), takes more
tangible shape in the particular case of third-order Euler--Poisson (i.e., ordinary
Euler--Lagrange) expressions.
The most general form of the Euler--Poisson equation of the third order reads
\begin{equation}\label{4}
\A{\,\bkey.\,}\V^{{{\prime}}{{\prime}}}{\, +
\,}(\V^{{\prime}}{\!\bkey.\,}{\bpartial}_{\eisf v})\,
\A{\,\bkey.\,}\V^{{\prime}}{\, + \,}\B{\,\bkey.\,}\V^{{\prime}}{\, +
\,}\CC\, = \,\Zero\,,
\end{equation}
where the notations of Proposition~\ref{prop3order} of Section~\ref{Appendix} are used and the conditions~(\ref{5}) on the skew-symmetric matrix $\A$, matrix~$\B$, and row $\CC$ hold. Hereafter the lower dot denotes contractions (such as multiplication of a row-vector by a column-vector).

Due to the affine structure of the left-hand side of equation (\ref{4}),
we may
introduce the differential form with coefficients that do not depend on the third-order derivatives:
\begin{align}
\nonumber
&\eps =
\sA_{ij} d\sv^{\prime}{}^j\otimes d\x^i + \sk_i dt \otimes
d\x^i,
\\
&\,\K =
(\WW{\!\bkey.\,}\bpartial_{\vv})\,\A{\,\bkey.\,}\WW+\B{\,\bkey.\,}\WW+\CC\,.
\label{6}
\end{align}

From the point of view of searching only holonomic local curves on
$J^3(\R,Q)$, the exterior differential systems that differ at most by  multiples of contact forms (\ref{Theta}) and multiples of
\[
\theta'{}^i=d\sv^i-\sv'{}^idt, \quad \theta''{}^i=d\sv'{}^i-\sv''{}^idt
\]
are considered equivalent. The differential forms (\ref{6}) and
(\ref{efull}) are equivalent:
\[
\eps-\se= \sA_{ij}\theta''{}^j\otimes d\x^i.
\]
The differential form (\ref{6}) may be accepted as an alternative
representation of the {\it Lepagean equivalent}~\cite{Krup} of
(\ref{efull}). \index{Lepagean equivalent}

\section{The Invariant Euler--Poisson Equation\label{Section5}}
Our primary interest is in the variational equations with symmetry.
Let $X(\bepsilon)$ denote the component-wise action of an infinitesimal
generator $X$ on a vector-valued differential form $\bepsilon$. Let the exterior differential system, generated by the form $\bepsilon$, possesses symmetry with the generator $X$. This means that there exist matrices ${\bf\varPhi}$,
${\bf\varXi}$, and ${\bf\varPi}$ that depend on $\copy\Elocity$ and
$\copy\PrimE$, and such that
\begin{equation}\label{7}
\ubm X(\bepsilon)={\bf\varPhi}\,.\,\bepsilon + {\bf\varXi}\,.\,(\ubm d\hbox{\ssb x}-
 \V\,\ubm d\ubm t)
+ {\bf\varPi}\,.\,(\ubm d  \V - \WW\ubm d \ubm t)\ubm.
\end{equation}

Equation (\ref{7}) specifies the condition that the vector exterior
differential system generated by the vector-valued differential form
$\bepsilon$ and the vector exterior
differential system generated by the shifted form $X(\bepsilon)$
are algebraically equivalent. For systems generated by one-forms, as in
our case, this is equivalent to the property that the set of
local solutions is preserved by the one-parametric Lie subgroup
generated by $X$, because any Pfaff system is complete (see \cite[page~64]{Slebodz} or~\cite[page~232]{Choquet-Analysis}). We see two advantages of this method:
\begin{itemize}
\item the symmetry concept is formulated in reasonably general terms;
\item the problem of invariance of a differential equation is reformulated
in algebraic terms by means of undetermined coefficients ${\bf\varPhi}$,
${\bf\varXi}$, and ${\bf\varPi}$;
\item the order of the underlying non-linear manifold is reduced
($J^2(\R,Q)$ instead of $J^3(\R,Q)$).
\end{itemize}

Further details may be found in~\cite{Sym}.

In the case of the Poincar\'e group\index{Poincar\'e! group}  \index{group! Poincar\'e} we assert that $\A$ and $\K$ in
(\ref{6}) do not depend upon $t$ and $\hbox{\ssb x}$. And for the sake of
reference it is worthwhile to put down the general expression of the generator
of the Lorentz group parameterized by a skew-symmetric matrix
${\W}$
and a
vector
$\bP$:
\begin{equation}\label{matsyuk:Lorentz}
\begin{split}
X = &
{}-(\bP\cdot\hbox{\ssb
x})\,{\partial_t}+{g_{\SSS0\SSS0}\,t}\,\bP\,.\,\bpartial_{\eisf x}
+\W\cdot(\hbox{\ssb x}\wedge\bpartial_{\eisf x})
\\ &{}+{g_{\SSS0\SSS0}}\,\bP\,.\,\bpartial_{\vv}
+(\bP\cdot \V)\:\V\,.\,\bpartial_{\vv}+\W\cdot(\V\wedge\bpartial_{\vv})
\\ &{}+2\,(\bP\cdot \V)\:\WW.\,\bpartial{_{\ww}}+(\bP\cdot
\WW)\:\V\,.\,\bpartial{_{\ww}}+\W\cdot(\WW\wedge\bpartial{_{\ww}})\,.
\end{split}
\end{equation}
Here, the centerdot denotes the inner product of vectors or
tensors and the lower dot denotes the contraction of row-vectors
and column-vectors, as before.

The system of equations \eqref{7} and \eqref{5} may possess many solutions, or no solutions
at all, depending on the dimension of the configuration manifold. For
example, in dimension \emph{one} any skew-symmetric matrix is zero.
If the dimension of manifold~$Q$ equals \emph{three}, there are no solutions to the system of PDEs
\eqref{7} and \eqref{5} (see Proposition~\ref{NoExist} in Section~\ref{Appendix}).
However, if the dimension of the manifold~$Q$ is \emph{two}, the
solution exists {\em and is unique}, up to a single scalar parameter
$\mu$ (see~\cite{Condenced}.

\begin{proposition}\label{matsyuk:E3}
The invariant Euler--Poisson equation\index{Euler-Poisson! equation} \index{equation! Euler-Poisson} for a relativistic two-dimen\-sional
motion is
\begin{align}
-\frac{\ast\V''\strut}{(1+\V\bcdot\V)^{3/2}\strut}
&+
3\,\frac{\ast\WW\strut}{(1 + \V\bcdot\V)^{5/2}}\,(\V\bcdot\WW)
\nonumber
\\
&-
\frac{\mu\strut}{(1+\V\bcdot\V)^{3/2}}\,\bigl[(1+\V\bcdot\V)\,\WW-
(\WW\bcdot\V)\,\V\bigr]=\Zero.
\label{10}
\end{align}
\end{proposition}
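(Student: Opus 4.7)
My plan is to combine the Poincaré invariance condition (\ref{7}) with the variationality constraints (\ref{5}), exploiting the low dimension of $Q$ to reduce everything to a scalar equation.

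First I would use translation invariance. Taking $X=\partial_t$ and $X=\partial_{\x^i}$ in (\ref{7}), and matching horizontal and vertical parts, forces $\A$, $\B$, $\CC$ to be functions of $\V$ and $\V'$ alone (no $t$, $\x$ dependence), as already asserted in the paragraph preceding the proposition. Next I would exploit the skew-symmetry of $\A$: in $\dim Q=2$ any skew $2\times 2$ matrix is $a(\V)\,\ast$, where $\ast$ is the two-dimensional Levi-Civita symbol. So the unknown $\A$ is reduced to a single scalar function $a(\V)$, and the problem becomes one of determining $a$, $\B$, $\CC$ from the coupled system formed by (\ref{5}) and (\ref{7}).

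Then I would bring in the boost generators from (\ref{matsyuk:Lorentz}). Under a Lorentz boost of parameter $\bP$, the vector $\V$ transforms like a spatial velocity, and $1+\V\bcdot\V$ (taking into account the sign of $g_{\SSS 0\SSS 0}$) is the natural Lorentz scalar built from it; so $a$ must be a function $\varphi(1+\V\bcdot\V)$. Inserting $a=\varphi(1+\V\bcdot\V)\,\ast$ into the $\bpartial_\sv$ and $\bpartial_\ww$ parts of~(\ref{7}) and reading off the coefficients of $\bP$ and $\W$ (and of the various monomials in $\WW$ on the right-hand side) yields an ordinary differential equation for $\varphi$; I expect its solution to be $\varphi=(1+\V\bcdot\V)^{-3/2}$ up to an overall multiplicative constant, which is the source of the conventional normalization of the first term in~(\ref{10}).

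With $\A$ fixed, the remaining task is to determine $\K$ (equivalently $\B$ and $\CC$). I would use the variationality conditions~(\ref{5}) to express $(\V'\bcdot\bpartial_\sv)\A\bcdot\V'$ in closed form and to constrain $\B$ and $\CC$; the residual freedom is exactly one scalar $\mu$, and Lorentz invariance forces the $\mu$-term to be the unique vector built from $\V$, $\WW$ and $\bcdot$ that is of weight compatible with~(\ref{10}), namely $(1+\V\bcdot\V)\,\WW-(\WW\bcdot\V)\,\V$ (i.e. the projection of $\WW$ transverse to $\V$ in the Lorentzian sense), divided by the appropriate power of $1+\V\bcdot\V$. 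Collecting the pieces gives~(\ref{10}). The main obstacle, I expect, is the book-keeping in matching the three tensor structures produced by $X(\bepsilon)$ against ${\bf\varPhi}\bdot\bepsilon+{\bf\varXi}\bdot(d\X-\V\,dt)+{\bf\varPi}\bdot(d\V-\WW\,dt)$: the undetermined matrices ${\bf\varPhi}$, ${\bf\varXi}$, ${\bf\varPi}$ must be eliminated consistently, and it is here that the special behaviour of the 2D Hodge $\ast$ under Lorentz boosts plays the decisive role in producing a unique $\varphi$ and in pinning down the coefficient $3$ of the second term in~(\ref{10}).
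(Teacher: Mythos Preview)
Your overall strategy coincides with the paper's: reduce $\A$ to a single scalar $a(\V)$ via the 2D Hodge star, determine $a$ from Lorentz invariance, then fix $\B$ and $\CC$ from the remaining invariance and variationality constraints. The paper does exactly this. But you skip the technical heart of the argument, and the shortcut you take there does not hold.

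The problematic step is your assertion that ``$1+\V\bcdot\V$ is the natural Lorentz scalar\dots\ so $a$ must be a function $\varphi(1+\V\bcdot\V)$''. This would be valid if (\ref{7}) said $X(\bepsilon)=0$; it does not. Because of the undetermined matrices ${\bf\varPhi}$, ${\bf\varXi}$, ${\bf\varPi}$, neither $\A$ nor $a$ is forced to be invariant in the naive sense. In the paper, one first uses the invertibility of $\A$ in dimension~2 to solve equations (\ref{matsyuk:81})--(\ref{matsyuk:83}) for ${\bf\varPhi}$, ${\bf\varXi}$, ${\bf\varPi}$ explicitly and substitute into (\ref{matsyuk:84}). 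That produces the six coupled identities (\ref{matsyuk:95})--(\ref{matsyuk:90}). Splitting (\ref{matsyuk:95}) and (\ref{matsyuk:98}) by components yields the second-order PDE systems (\ref{matsyuk:dadpp_from_Omega}) and (\ref{matsyuk:dadpp_from_pi}) for~$a$. One then introduces $f=(\V\bkey.\bpartial_{\sv})a/a$ and $\zeta=(\sv_1\partial_{\sv_2}-\sv_2\partial_{\sv_1})a$, and only after combining these equations does one deduce $\zeta=0$ (so $a=a(y)$ with $y=\sv_1{}^2+\sv_2{}^2$) and the ODE $(1-y)f_y'-f=3$, whose solution yields $a=\mathrm{const}\cdot(1+\V\bcdot\V)^{-3/2}$. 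The exponent $-3/2$ and the coefficient $3$ in the second term of (\ref{10}) are \emph{outputs} of this computation, not inputs from a scaling argument.

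Similarly, your treatment of $\B$ and $\CC$ is too sketchy. In the paper, $\B$ is obtained by solving (\ref{matsyuk:96}) and (\ref{matsyuk:99}) under the symmetry constraint (\ref{diss8.ii'}), giving $\sB_{ij}=\mathrm{const}\cdot(1+\V\bcdot\V)^{-3/2}\bigl(v_iv_j-(1+\V\bcdot\V)g_{ij}\bigr)$, and then one checks (\ref{diss8.iv'}). For $\CC$ one shows that (\ref{matsyuk:97}) and (\ref{matsyuk:90}) admit only the trivial solution $\CC=\Zero$; this is what makes $\mu$ the sole free parameter. Your proposal asserts this outcome but gives no mechanism for it.
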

The dual vector $\ast\mbox{\ssb
v}$ above is defined, as usual, by the formula $(\ast\mbox{\ssb
v})_{i}=\epsilon_{ji}{\sf v}^{j}$.
For the proof of Proposition~\ref{matsyuk:E3}, see Section~\ref{Appendix}.

There are two different Lagrange
functions known for the left-hand side of (\ref{10}):
\begin{align}
L_{(1)} &= -\frac{\sv\,'\,^2\,\sv^1}{\sqrt{1+\sv_i\sv^{i}\strut }(1+\sv_2\sv^2)} + \mu
\sqrt{1+\sv_i\sv^i},
\label{L1}
\\
L_{(2)} &= \frac{\sv\,'\,^1\,\sv^2}{\sqrt{1 + \sv_i\sv^{i}\strut }(1 +
\sv_1\sv^1)} + \mu \sqrt{1 + \sv_i\sv^i}  \label{L2}.
\end{align}

With the help of Proposition~\ref{homogen}, we immediately
obtain the
`homogeneous' counterpart of~(\ref{10}),
\begin{equation}
-\frac{{\ddot{\bmat u}}\times\bmat u\strut}{\|\bmat u\|^{{\scriptstyle3}}\strut}
+ 3\,\frac{{\dot{\bmat u}} \times\bmat u\strut}{\|\bmat u\|^{{\scriptstyle5}}}\,({\dot{\bmat u}}\cdot\bmat u) - {} \frac{\mu\strut}{\|\bmat u\|^{{\scriptstyle3}}}\,\bigl[ (\bmat u\cdot\bmat u)\,{\dot{\bmat u}}
- ({\dot{\bmat u}} \cdot\bmat u)\,\bmat u  \bigr] =\bmat 0,
\label{homvareq}
\end{equation}
with the corresponding family of Lagrange functions,
\begin{eqnarray*}
{\cal L}_{(0)} & = & \frac{u^0\big(\dot u^2 u^1-\dot u^1 u^2\big)}{\au
\big(u_1 u^1 + u_2 u^2 \big)} + \mu\au\,,
\\
{\cal L}_{(1)} & = & \frac{u^1\big(\dot u^0 u^2-\dot u^2 u^0\big)}{\au
\big(u^0 u^0 +u_2 u^2 \big)} + \mu\au\,,
\\
{\cal L}_{(2)} & = & \frac{u^2\big(\dot u^1 u^0-\dot u^0 u^1\big)}{\au
\big(u_0 u^0 + u_1 u^1 \big)} + \mu\au\,,
\end{eqnarray*}

To produce a variational equation of the third order, the Lagrange function
should be of affine type in second derivatives. It makes no sense to even try
to find such a Poincar\'e-invariant Lagrange function in the space-time of  dimension
greater than two~\cite{thesis}. But the generalized momentum
\[
\frac{{\partial\cal L}}{\partial\bmat u} - \frac{ d}{{
d\zeta}}\frac{{\partial\cal L}}{\partial
\dot{\bmat u}
}
= \frac{{\dot{\bmat u}}\times\bmat u}{\|\bmat u\|^{{\scriptstyle3}}} + \ubm\mu\,\frac{\bmat u}{\|\bmat u\|}
\]
does not depend on a particular choice of the Lagrange
functions from the above family. This expression for the generalized momentum was (in different
notations) in fact obtained in~\cite{Plyush} by means of introducing an
abundance of Lagrange multipliers into the formulation of the corresponding
variational problem.

\section{An Example: Free Relativistic Top in Two Dimensions}
Equation~(\ref{homvareq}) carries certain amount of physical sense.
After some development it is possible to show
(see~\cite{DAN282} and~\cite{Astro}) that in terms of spin
vector,
\begin{equation*}
\sigma_{\alpha}=\frac{1}{2\au}\varepsilon_{\alpha\beta\gamma\delta}u^\beta
S^{\gamma\delta},
\end{equation*}
the Mathisson-Papapetrou equations\index{Mathisson-Papapetrou equations} \index{equations! Mathisson--Papapetrou} \eqref{MPu} and \eqref{MPS} under
the Mathisson--Pirani auxiliary condition (\ref{Pir}) \index{Mathisson--Pirani condition} \index{condition! Mathisson--Pirani} are equivalent to the following system:
\begin{align}
\varepsilon_{\alpha\beta\gamma\delta}\ddot u^\beta u^\gamma
\sigma^\delta -
3\,\frac{ { {\dot{\bmat u}} \!\cdot\!\bmat u} }{\phantom{^2}\au^2}
\,\varepsilon_{\alpha\beta\gamma\delta}\dot u^\beta u^\gamma \sigma^\delta
+ \frac{m_{\SSS0}}{\sqrt{|g|}} \left[{({\dot{\bmat u}}
\cdot\bmat u)}\,u_\alpha - \au^2 \dot u_\alpha\right]
&= {\cal F}_\alpha,
\label{18}
\\
\au^2 \dot\sigma_\alpha +{(\bmat\sigma\cdot {\dot{\bmat u}})}\, u_\alpha  &= 0,
\nonumber
\\
\bmat\sigma\cdot\bmat u &= \bmat0\,.\nonumber
\end{align}
Evidently, the four-vector $\bmat\sigma$ is constant if the force $\cal F_\alpha$ vanishes. Equation (\ref{18}) admits a
planar motion for which $u_3=\dot u_3=\ddot u_3=0$. In this case it becomes
\begin{equation*}
\ubm{\eta_3\sigma_3}\left(\frac{
\ddot{\bmat u}
\times\bmat u\strut}{\|\bmat u\|^{{\ubm{\scriptstyle3}}}} -
3\,\frac{
\dot{\bmat u}
\times\bmat u\strut}{\|\bmat u\|^{\ubm{\scriptstyle5}}}\,(
\dot{\bmat u}
\cdot\bmat u)\right) + {}
 \frac{\ubm{m_{\SSS0}}\strut}{\|\bmat u\|^{\ubm{\scriptstyle3}}}\,\bigl[(\bmat u\cdot \bmat u)\,
\dot{\bmat u}
 -
(
\dot{\bmat u}
\cdot\bmat u)\,\bmat u  \bigr] = 0,
\end{equation*}
where we set $g_{\alpha\beta}={\rm diag}\big( 1, \eta_1, \eta_2, \eta_3
\big)$. Comparison with (\ref{homvareq}) implies
\[
\mu=\frac{\strut m_{\SSS0}}{\eta_3\sigma_3}.
\]

\section{The Inverse Problem for the Euler--Poisson Equations\label{Appendix}}
\subsection{The Generalized Helmholtz Conditions}
We begin by presenting the coordinate form of the criterion
\begin{equation}\label{matsyuk:deltaEpsZero=0}
\delta \se_{\SSS0} =0.
\end{equation}
Recall that this criterion follows from~\eqref{crit}.
\begin{proposition}
Let
\begin{equation}\label{matsyuk:E0}
\big\{\sE_i=0\big\}
\end{equation}
be an arbitrary system of ordinary differential equations of order~$s$. The necessary and sufficient condition for system~(\ref{matsyuk:E0}) to be a system of Euler--Poisson expressions \index{Euler-Poisson! expressions}\index{expressions! Euler-Poisson} for some Lagrange function are given by the following system of partial differential equations imposed on the functions~$\sE_i$\textup{:}
\begin{subequations}\label{matsyuk:2}
\begin{align}
\label{matsyuk:2.1}
 \dfrac{\partial\sE_i}{\partial \x^j}
 - \dfrac{\partial\sE_j}{\partial \x^i}
+\sum_{k=0}^{s}(-1)^k D_t^k
\left(
\dfrac{\partial\sE_i}{\partial \sv_{k-1}^j}
-\dfrac{\partial\sE_j}{\partial \sv_{k-1}^i}
\right) &=0,&
\\
\label{matsyuk:2.2}
\dfrac{\partial\sE_i}{\partial
\sv_{{r}-1}^j}-\sum_{k={r}}^{s}(-1)^k\dfrac{k!}{(k-{r})!{r}!}D_t^{k-{r}}\dfrac{\partial\sE_j}{\partial
\sv_{k-1}^i}&=0,\quad 1\leq {r}\leq {s}.
\end{align}
\end{subequations}
\end{proposition}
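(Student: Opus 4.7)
The plan is to unfold the criterion $\delta\se_{\SSS0}=0$, which by~\eqref{matsyuk:deltaEpsZero=0} and the discussion preceding~\eqref{crit} is equivalent to the variationality of $\{\sE_i\}$, into an explicit PDE system by computing it in coordinates. First, I would expand
\[
d_{\sv}\se_{\SSS0} \;=\; \sum_{r=0}^{s}\frac{\partial\sE_i}{\partial\sv_{r-1}^j}\,d\sv_{r-1}^j\wedge d\x^i,
\]
with the convention $\sv_{-1}^j:=\x^j$, using $d_{\sv}\,d\x^i=0=d_{\sv}\,d\sv_{(r)}^i$. Next, I would apply $\deg+\sum_{m=1}^{s}\tfrac{(-1)^m}{m!}D_t^m\iota^m$ termwise. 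The iterated contraction gives
\[
\iota^m(d\sv_{r-1}^j\wedge d\x^i) \;=\; \tfrac{r!}{(r-m)!}\,d\sv_{r-m-1}^j\wedge d\x^i \qquad (m\leq r),
\]
while the generalised Leibniz rule $D_t^m(\alpha\wedge\beta)=\sum_l\binom{m}{l}D_t^l\alpha\wedge D_t^{m-l}\beta$, together with $D_t\,d\x^i=d\sv^i$ and $D_t\,d\sv_{(r)}^i=d\sv_{(r+1)}^i$, distributes $D_t^m$ across the wedge; combined with Leibniz on the scalar coefficient, this produces the expansion of $\delta\se_{\SSS0}$ in the basis $\{d\sv_{p-1}^j\wedge d\sv_{q-1}^i\}$.

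One then reads off the conditions by setting the coefficient of each independent basis $2$-form to zero. The coefficient of $d\x^j\wedge d\x^i$ (the sector $p=q=0$), antisymmetrised in $(i,j)$, yields~\eqref{matsyuk:2.1}: the $k=0$ summand inside the sum in~\eqref{matsyuk:2.1} coincides with the standalone term sitting outside, and together they absorb the factor of $2$ arising from $\deg(d_{\sv}\se_{\SSS0})=2\,d_{\sv}\se_{\SSS0}$. The coefficient of $d\sv_{r-1}^j\wedge d\x^i$ for $r\geq 1$ (sector $p\geq 1$, $q=0$), after gathering both the straightforward terms and the cross-terms $d\x^j\wedge d\sv_{r-1}^i$ (which are generated by $D_t\,d\x^i=d\sv^i$ and are exactly what produces the index-swap pattern $\partial\sE_j/\partial\sv_{k-1}^i$ in the proposition), and after identifying the combinatorial factor $\tfrac{k!}{(k-r)!\,r!}=\binom{k}{r}$, reproduces~\eqref{matsyuk:2.2}. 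The mixed cross-coefficients $d\sv_{p-1}^j\wedge d\sv_{q-1}^i$ with $p,q\geq 1$ reduce, by index renaming and antisymmetrisation in $(i,j)$, to algebraic consequences of~\eqref{matsyuk:2.2} and impose no new constraints.

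The main obstacle is the combinatorial bookkeeping of the second step: each $D_t^m\iota^m$ scatters into many basis $2$-forms through the generalised Leibniz rule, and one must carefully aggregate all the contributions, including in particular the index-swapping cross-terms $d\x^j\wedge d\sv_{\bullet}^i$, before the closed-form conditions~\eqref{matsyuk:2.1}--\eqref{matsyuk:2.2} emerge. Once the terms are grouped correctly at each fixed basis element, the matching of binomial coefficients is routine, and the identity $\delta^2=0$ noted in the text provides a consistency check that the three sectors do not produce independent equations beyond those listed.
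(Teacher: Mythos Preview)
Your plan is essentially the same as the paper's: expand $d_{\sv}\se_{\SSS0}$, apply $\iota^m$ and $D_t^m$ via Leibniz, and read off the coefficients of the basis $2$-forms. One point deserves correction, though. You anticipate a ``mixed'' sector $d\sv_{p-1}^j\wedge d\sv_{q-1}^i$ with $p,q\ge 1$ whose vanishing would have to be shown to be an algebraic consequence of~\eqref{matsyuk:2.2}. In fact the paper's calculation reveals that these coefficients are \emph{identically zero} in $\delta\se_{\SSS0}$, before any equation is imposed: after the trinomial Leibniz expansion and a change of summation indices, the relevant inner sum is $\sum_{q=0}^{u}(-1)^{k-q}\binom{u}{q}$, which vanishes for $u>0$ and equals $(-1)^k$ for $u=0$. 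Thus $\delta\se_{\SSS0}$ collapses entirely onto the single family $d\sv_{r-1}^j\wedge d\x^i$, $0\le r\le s$, and \eqref{matsyuk:2.1}--\eqref{matsyuk:2.2} drop out directly. So the ``main obstacle'' you flag is resolved not by checking redundancy against~\eqref{matsyuk:2.2} but by this binomial cancellation; that is the combinatorial heart of the argument, and your write-up should make it explicit rather than call it routine.
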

\begin{proof}
We calculate the left hand side of~(\ref{matsyuk:deltaEpsZero=0}) in a way similar to that of~\cite[page~220]{Lawruk}, but provide more details. With~$\se_{\SSS0}$ from~(\ref{e0}) and~$\delta$ from~(\ref{matsyuk:delta}), one obtains
\begin{align*}
d_{\sv}\se_{\SSS0} &=
\sum_{p=0}^s\frac{\partial\sE_i}{\partial\sv_{p-1}^j}\,d\sv_{p-1}^j\wedge d\x^i,
\\[2pt]
\deg d_{\sv}\se_{\SSS0} &=
2\,d_{\sv}\se_{\SSS0},
\end{align*}
and thus, for $s\ge m\ge 1$,
\begin{equation*}
    \iota^m d_{\sv}\se_{\SSS0}=\sum_{p=m}^s\frac{p!}{(p-m)!}\,\frac{\partial\sE_i}{\partial
\sv_{p-1}^j}\,d\sv_{p-m-1}^j\wedge d\x^i.
\end{equation*}
Since for arbitrary differential forms $\alpha$, $\beta$, $\gamma$,
\begin{equation*}
    D_t^m(\alpha\wedge\beta\wedge\gamma)=\sum_{v +l+n=m}\frac{m!}{v!l!n!}D_t^v \alpha\wedge D_t^l\beta\wedge D_t^n\gamma\,,
\end{equation*}
one has
\begin{equation*}
D_t^m\left(\frac{\partial\sE_i}{\partial\sv_{p-1}^j}\,d\sv_{p-m-1}^j\wedge d\x^i\right) =
\sum_{v+l+n=m}\frac{m!}{v!l!n!}\left(D_t^v\frac{\partial\sE_i}{\partial\sv_{p-1}^j}\right)d\sv_{p-m-1+l}^j\wedge d\sv_{n-1}^i,
\end{equation*}
and, extending the range of $m$ to include $m=0$,
\begin{multline*}
\delta \se_{\SSS0}=d_{\sv}\se_{\SSS0}
\\
+
\sum_{m=0}^s\sum_{p=m}^s(-1)^m\frac{p!}{(p-m)!}\, \sum_{v +l+n=m}\frac{1}{v!l!n!}D_t^v \frac{\partial\sE_i}{\partial\sv_{p-1}^j}\,d\sv_{p-m+l-1}^j\wedge d\sv_{n-1}^i.
\end{multline*}
Let us introduce new summation indices $q$ and $u$ by $p-m=q$, $q+l=u$. Rewriting the above sum, we get
\setlength{\multlinegap}{0pt}
\begin{multline*}
\delta \se_{\SSS0}=
d_{\sv}\se_{\SSS0}
\\
+ \sum_{m=0}^s\sum_{q=0}^{s-m}(-1)^m\frac{(m+q)!}{q!}\sum_{v +u+n=m+q}\frac{1}{v!(u-q)!n!} D_t^v \frac{\partial\sE_i}{\partial\sv_{q+m-1}^j}\,d\sv_{u-1}^j\wedge d\sv_{n-1}^i,
\end{multline*}
where the indices $u$ and $q$ satisfy the condition $u-q\ge 0$. Next we transform the sums as follows,
\begin{align*}
    \sum_{m=0}^s\sum_{q=0}^{s-m}&=\sum_{k=0}^s\sum_{q+m=k},
\intertext{with $k=m+q$ without restrictions, and introducing $r=u+n$,  and noticing that~$r$ can not exceed $k$ in the first sum below and that the index $v$ in there has to satisfy the condition $v+r=k$, so that}
    \sum_{v +u+n=m+q}&=\sum_{r=0}^k\sum_{u+n=r}.
\end{align*}
Next, we replace $v$ with $k-r$ in the expression for $\delta \se_{\SSS0}$:
{\setlength{\multlinegap}{0pt}
\begin{multline*}
\delta \se_{\SSS0}=d_{\sv}\se_{\SSS0}
\\
+ \sum_{k=0}^s\sum_{q+m=k}(-1)^m\frac{k!}{q!}\sum_{r=0}^k\sum_{u+n=r}\frac{1}{(k-r)!(u-q)!n!} D_t^{(k-r)}\frac{\partial\sE_i}{\partial\sv_{k-1}^j}\,d\sv_{u-1}^j\wedge d\sv_{n-1}^i.
\end{multline*}}
Actually, the last sum is being carried out over~$u$ alone under the condition that each occurrence of $n$ is replaced with $r-u$:{\setlength{\multlinegap}{0pt}
\begin{multline*}
\delta \se_{\SSS0}=d_{\sv}\se_{\SSS0}
\\
+ \sum_{k=0}^s\sum_{q+m=k}\sum_{r=0}^k\sum_{u=0}^r\frac{(-1)^m u!k!}{u!(k-r)!(r-u)!q!(u-q)!}D_t^{(k-r)}\frac{\partial\sE_i}{\partial\sv_{k-1}^j}\,d\sv_{u-1}^j\wedge d\sv_{r-u-1}^i,
\end{multline*}}%
where we introduced the trivial multiplier $1\equiv{u!}/{u!}$ in the summand.
The sum over $q$ and $m$ may be carried out first and the index~$q$ runs from $0$ to $u$, since in accordance with other conditions, $u\le r$ and $r\le k$:
\begin{equation*}
    \sum_{q+m=k,\; u-q\ge0}(-1)^m\frac{u!}{q!(u-q)!}=\sum_{q=0}^u(-1)^{(k-q)}\binom{u}{q}.
\end{equation*}
This sum equals~$0$ if $u>0$, and~$(-1)^k$ if $u=0$. Finally,
\begin{equation*}
    \delta \se_{\SSS0}=d_{\sv}\se_{\SSS0} +\sum_{r=0}^s\sum_{k=0}^s\frac{(-1)^k k!}{(k-r)!r!}D_t^{(k-r)}\frac{\partial\sE_i}{\partial\sv_{k-1}^j}\,d\x^j\wedge d\sv_{r-1}^i,
\end{equation*}
where again the indices $k$ and $r$ are subject to the constraint $k-r\ge0$, so that the right-hand side may be rewritten as
\begin{equation*}
    \delta \se_{\SSS0}=\sum_{r=0}^s\left(\frac{\partial\sE_i}{\partial\sv_{r-1}^j}
    -
    \sum_{k=r}^s(-1)^k\frac{k!}{(k-r)!r!}D_t^{(k-r)}\frac{\partial\sE_j}{\partial\sv_{k-1}^i}\right)d\sv_{r-1}^j\wedge d\x^i.
\end{equation*}
The desired result is obtained by equating this expression to zero.
\end{proof}

\begin{proposition}
System of equations~\eqref{matsyuk:2} is equivalent to the system
\begin{equation}\label{matsyuk:criterion}
\dfrac{\partial\sE_i}{\partial
\sv_{{r}-1}^j}-\sum_{k={r}}^{s}(-1)^k\dfrac{k!}{(k-{r})!{r}!}D_t^{k-{r}}\dfrac{\partial\sE_j}{\partial
\sv_{k-1}^i} =0, \quad  0\le r\le s,
\end{equation}
obtained from~\eqref{matsyuk:2.2}
by extending the range of ${r}$ to
include ${r}=0$:
\end{proposition}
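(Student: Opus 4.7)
My plan is to split the new condition \eqref{matsyuk:criterion}, at its newly included value $r=0$, into its $(i,j)$-antisymmetric and $(i,j)$-symmetric parts. The antisymmetric part (obtained by subtracting from \eqref{matsyuk:criterion} at $r=0$ its $i\leftrightarrow j$ swap, and using $\sv_{-1}^i=\x^i$) is manifestly \eqref{matsyuk:2.1} as written in the statement. The symmetric part is an identity that must be shown to follow from \eqref{matsyuk:2.2} alone. Thus the whole equivalence reduces to verifying this symmetric-part identity.

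The direction \eqref{matsyuk:criterion}$\Rightarrow$\eqref{matsyuk:2} is then immediate: for $r\ge 1$, \eqref{matsyuk:criterion} is literally \eqref{matsyuk:2.2}, and the $r=0$ case yields \eqref{matsyuk:2.1} by the antisymmetrization just described. This direction takes essentially no work.

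For the converse \eqref{matsyuk:2}$\Rightarrow$\eqref{matsyuk:criterion}, only the symmetric part of \eqref{matsyuk:criterion} at $r=0$ requires real effort. After observing that the $k=0$ contribution on the right cancels the leading $\partial/\partial\x$ terms on the left upon $(i,j)$-symmetrization, the surviving content is
\[
\sum_{k=1}^{s}(-1)^k D_t^k\left(\frac{\partial\sE_i}{\partial\sv_{k-1}^j}+\frac{\partial\sE_j}{\partial\sv_{k-1}^i}\right)=0.
\]
I would introduce the shorthand $P_k:=\frac{\partial\sE_i}{\partial\sv_{k-1}^j}+\frac{\partial\sE_j}{\partial\sv_{k-1}^i}$, so that symmetrizing \eqref{matsyuk:2.2} in $(i,j)$ reads, for every $1\le r\le s$,
\[
D_t^r P_r=\sum_{k=r}^{s}(-1)^k\binom{k}{r}D_t^k P_k.
\]
Multiplying by $(-1)^r$, summing over $r$ from $1$ to $s$, and interchanging the order of summation, the inner sum collapses to $\sum_{r=1}^{k}(-1)^r\binom{k}{r}=-1$ for every $k\ge 1$, by the elementary binomial identity $\sum_{r=0}^{k}(-1)^r\binom{k}{r}=0$. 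One thereby arrives at
\[
\sum_{r=1}^{s}(-1)^r D_t^r P_r=-\sum_{k=1}^{s}(-1)^k D_t^k P_k,
\]
which, since the two sides differ only in the name of the dummy index, forces the sum to vanish.

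The only genuine obstacle is this combinatorial cancellation; everything else is routine bookkeeping with the $(i,j)$-symmetric/antisymmetric splitting. The trick is to identify the correct symmetric combination $P_k$ and then to invoke the elementary alternating-binomial identity.
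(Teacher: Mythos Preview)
Your argument is correct. The combinatorial identity $\sum_{r=1}^{k}(-1)^r\binom{k}{r}=-1$ is exactly the hinge, and your use of it is clean.

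The paper's own proof reaches the same conclusion by a slightly different route. Rather than splitting the $r=0$ equation into its symmetric and antisymmetric parts in $(i,j)$, the paper works directly with \eqref{matsyuk:2.1}: it separates the $k=0$ term, then substitutes \eqref{matsyuk:2.2} (with indices $i,j$ as they stand, not symmetrized) into the sum $\sum_{k\ge 1}(-1)^kD_t^k\,\partial\sE_i/\partial\sv_{k-1}^j$, interchanges the order of summation, and applies the same binomial identity to collapse the inner sum to $-1$. What emerges is twice the $r=0$ instance of \eqref{matsyuk:criterion} itself. Your approach isolates the genuinely new content---the symmetric part---and shows it is a consequence of \eqref{matsyuk:2.2} alone, which is conceptually sharper; the paper's approach avoids the sym/antisym decomposition at the cost of carrying both pieces through a single substitution. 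Either way the work reduces to the same alternating-binomial cancellation.
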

\begin{proof}
The antisymmetrization of~(\ref{matsyuk:criterion}) at ${r}=0$ produces  equation~(\ref{matsyuk:2.1}).
Conversely, in equation~(\ref{matsyuk:2.1}) let us separate the term with
$k=0$:
 \begin{equation}\label{matsyuk:2.1.3}
 2\,\dfrac{\partial\sE_i}{\partial \x^j}
 - 2\,\dfrac{\partial\sE_j}{\partial \x^i}
+\sum_{k=1}^{s}(-1)^k D_t^k
\dfrac{\partial\sE_i}{\partial \sv_{k-1}^j}
-\sum_{k=1}^{s}(-1)^k D_t^k\dfrac{\partial\sE_j}{\partial \sv_{k-1}^i}
 =0.
\end{equation}
In the first sum substitute the value of
${\partial\sE_i}/{\partial \sv_{k-1}^j}$ from
equation~(\ref{matsyuk:2.2}):
\[
\sum_{k=1}^{s}(-1)^k D_t^k \dfrac{\partial\sE_i}{\partial \sv_{k-1}^j}
=\sum_{k=1}^{s}(-1)^k D_t^k
\sum_{{r}=k}^{s}(-1)^{r}\dfrac{{r}!}{({r}-k)!k!}D_t^{{r}-k}\dfrac{\partial\sE_j}{\partial
\sv_{{r}-1}^i}.
\]
Now interchange the order of summation:
$\sum_{k=1}^{s}\sum_{{r}=k}^{s}=\sum_{\substack{{r}\ge k} =1}^{s}=\sum_{{r}=1}^{s}\sum_{k=1}^{r}$. Calculate the sum
over $k$:
\[
\sum_{k=1}^{r}(-1)^k\dfrac{{r}!}{({r}-k)!k!}=\sum_{k=0}^{r}(-1)^k \binom {r} k -
\binom {r} 0 =0-1=-1.
\]
Ultimately, equation~(\ref{matsyuk:2.1})
becomes
\[ 2\,\dfrac{\partial\sE_i}{\partial \x^j} -
2\,\dfrac{\partial\sE_j}{\partial \x^i} -\sum_{k=1}^{s}(-1)^k D_t^k
 \dfrac{\partial\sE_j}{\partial \sv_{k-1}^i} -\sum_{k=1}^{s}(-1)^k
D_t^k\dfrac{\partial\sE_j}{\partial \sv_{k-1}^i} =0,
\]
which
coincides with equation~(\ref{matsyuk:criterion}) multiplied by $2$ at $r=0$.
\end{proof}
  The
criterion~(\ref{matsyuk:criterion}) has been obtained by many authors. The Reader may
refer to book~\cite{Krup} by Olga Krupkov\'a for a review.
\subsection{The Fourth Order Variational ODEs}
When convenient, we shall use notations and conventions from matrix algebra, thus
the $\otimes$ symbol will mean the tensor (sometimes called
`direct') product of matrices, whereas $\odot$ and~$\wedge$ will respectively denote the symmetrization and the antisymmetrization of~$\otimes$.
We also introduce a special notation $\bw=\V'$ just for convenience.
Further, let us introduce the following special notations for the truncated total derivative operators:
\begin{align*}
{\Dx}&=\partial_{t}{\,+\,}\V{\,\bkey.\,}{\bpartial}_{\eisf x},
\\
\Du&=\partial_{t}{\,+\,}\V{\,\bkey.\,}{\bpartial}_{\eisf x}+\bw{\,\bkey.\,}{\bpartial}_{\eisf v},
\\
\Dw &=\partial_{t}{\,+\,}\V{\,\bkey.\,}{\bpartial}_{\eisf x}
 +\bw{\,\bkey.\,}{\bpartial}_{\eisf v}
 +\bw'{\,\bkey.\,}{\bpartial}_{\eisf w}.
\end{align*}
\begin{proposition}[\cite{matmet20}]\label{prop4order}
A system of the fourth order ODEs
\begin{equation*}
    \sE_i\left(t,\x^{j},\sv^{j},\sw^{j},\sw'^{j},\sw''^{j}\right)=0
\end{equation*}
is an Euler--Poisson system (in other words \emph{is a variational system}) if and only if it is of the form
\begin{equation*}
    {\bE}=\M\bd\bw''+(\bw'\!\bd\bpartial_{\sw})\M\bd\bw'+\A\bd\bw'+2\Du\M\bd\bw'+\bb,
\end{equation*}
where the symmetric matrix $\M$, the skew-symmetric matrix $\A$, and the row $\bb$ depend on the variables $t$, $\x^{j}$, $\sv^{j}$, $\sw^{j}$ and satisfy the following system of PDEs:
\begin{subequations}
\begin{gather}
    \lp\sw{[i}\sM_{j]k}=0\,,\notag\\
    \lp\sw i\sA_{jk}+2\lp\sv{[j}\sM_{k]i}=0\,,\tag{\theequation\,j}\label{j}\\
    \lp\sv{[i}\sA_{jk]}=0\,,\tag{\theequation\,i}\label{i}\\
    2\lp\sw{[i}\Sb_{j]}+3\Du\sA_{ij}=0\,,\tag{\theequation\,ii}\label{ii}\\[1\jot]
\begin{aligned}
         \lp\sw i\lp\sw j\Sb_k
         +2\lp\sv{(i}\sA_{j)k}
         -6\lp\x{(i}\sM_{jk)}+\Du\lp\sv k\sM_{ij}
&\\
         -4\lp\sv{(i}\sM_{j)k}-2\Du^2\lp\sw i\sM_{jk}&=0\,,
\end{aligned}
        \tag{\theequation\,jjj}\label{jjj}
 \\[1\jot]
 \begin{aligned}
    2\lp\sw k\lp\sv{[i}\Sb_{j]}-4\lp\x{[i}\sA_{j]k}
    +\lp\x j\sA_{ij}
    &
    +2\Du\lp\sv j\sA_{ij}
    \\
    &
    -4\Du\lp\x{[i}\sM_{j]k}-2\Du^2\lp\sv{[i}\sM_{j]k}=0\,,
    \end{aligned}\tag{\theequation\,iv}\label{iv}
    \\[1\jot]
    \bpartial_\sv\odot\bb-\Du\,\bpartial_\sw\odot\bb+\Du^3\M=\,\Zero\,,\tag{\theequation\,v}\label{v}\\
    4\bpartial_\x\wedge\bb-2\Du\,\bpartial_\sv\wedge\bb-\Du^3\A=\,\Zero\,.\tag{\theequation\,vi}\label{vi}
\end{gather}
\end{subequations}
\end{proposition}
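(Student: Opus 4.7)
The plan is to specialize the Helmholtz-type criterion~\eqref{matsyuk:criterion} to $s=4$ and descend through the five values $r=4,3,2,1,0$. At each level the resulting identity is polynomial in the jet variables of order higher than those that appear in $\M$, $\A$ and $\bb$; separating by degree in those variables and by irreducible symmetry type in the free indices extracts both the structural form of $\sE_i$ and the system~\eqref{j}--\eqref{vi}. Sufficiency will then be obtained by direct substitution of the Ansatz $\bE=\M\bd\bw''+(\bw'\!\bd\bpartial_\sw)\M\bd\bw'+\A\bd\bw'+2\Du\M\bd\bw'+\bb$ back into~\eqref{matsyuk:criterion}.

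First I would take $r=4$: the criterion immediately yields $\partial\sE_i/\partial\sw''^j=\partial\sE_j/\partial\sw''^i$, so $\sM_{ij}:=\partial\sE_i/\partial\sw''^j$ is symmetric. At $r=3$ the identity $\partial\sE_i/\partial\sw'^j+\partial\sE_j/\partial\sw'^i=4\,D_t\sM_{ji}$ has an $\sw'''$-free left-hand side; equating coefficients of $\sw'''$ on both sides forces $\M$ to be independent of $\sw''$, and then equating coefficients of $\sw''$ forces $\M$ to be independent of $\sw'$, so $\M=\M(t,\x,\sv,\sw)$. Integrating the residual equation in $\sw'^j$ yields the announced structural form of $\sE_i$: the symmetric-in-$(ij)$ part of $\partial\sE_i/\partial\sw'^j$ produces $(\bw'\!\bd\bpartial_\sw)\M\bd\bw'+2\Du\M\bd\bw'$, consistency of the second mixed partial derivatives in $\sw'$ forces the full symmetry of $\partial\sM_{jk}/\partial\sw^i$, which is the first condition listed in the Proposition; the antisymmetric-in-$(ij)$ remainder supplies an antisymmetric matrix $\sA_{ij}$, and the part of $\sE_i$ independent of $\sw'$ is identified with $\Sb_i$.

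At the lower levels $r=2,1,0$ the criterion produces further polynomial identities in the jet variables $(\sw',\sw'',\sw''',\sw'''')$. At each level the coefficients of the highest-order jet variable cancel automatically by the symmetries established at the previous levels, so only finitely many genuine constraints remain. Separating these by degree in $\sw'$ and $\sw''$ and by $S_2$/$S_3$ symmetry type in the free indices, the $r=2$ level furnishes~\eqref{j} (whence~\eqref{i} follows by full antisymmetrization) together with~\eqref{ii}; the $r=1$ level furnishes~\eqref{jjj} and~\eqref{iv}; and the $r=0$ level, whose content (modulo the preceding conditions) is the antisymmetric-in-$(ij)$ part of the full Helmholtz system, yields the two closure conditions~\eqref{v} and~\eqref{vi} expressing $\Du^3\M$ and $\Du^3\A$ in terms of the derivatives of $\bb$.

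The main obstacle is combinatorial rather than conceptual: every application of $D_t$ raises the derivative order by one, so at each level one must verify that the top-order coefficients (in $\sw''',\sw''''$ and so on) vanish identically by the symmetries established at previous levels, and then carefully separate the remaining polynomial identity by degree in $\sw'$, $\sw''$ and by symmetry type in the free indices. Once this bookkeeping is completed, sufficiency is the verification that substituting the Ansatz for $\bE$ into~\eqref{matsyuk:criterion} with~\eqref{j}--\eqref{vi} imposed makes every term cancel at every $r\le 4$, using the symmetry $\partial_{\sw^i}\sM_{jk}=\partial_{\sw^j}\sM_{ik}$ repeatedly.
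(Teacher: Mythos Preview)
Your strategy is the paper's own: specialize the Helmholtz criterion~\eqref{matsyuk:criterion} to $s=4$ and peel off the conditions level by level in $r$. Your observation that already the $\sw''$-coefficient of the $r=3$ identity forces $\partial_{\sw'^i}\sM_{jk}=0$ (via the linear system $T_{jik}+T_{ijk}=4T_{kij}$ with $T_{abc}=T_{acb}$) is correct and slightly cleaner than the paper, which extracts the same fact from the $\sw'''$- and $\sw''''$-terms at $r=2$ and $r=1$.

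However, several of your attributions are wrong and would not survive the actual computation. First, the ``consistency of mixed second partials in $\sw'$'' at $r=3$ does \emph{not} force the full symmetry $\partial_{\sw^{[i}}\sM_{j]k}=0$: differentiating the residual $r=3$ identity gives $S_{lji}+S_{lij}=4\,\partial_{\sw^l}\sM_{ij}$ with $S_{abc}=\partial_{\sw'^a}\partial_{\sw'^b}m_c$, which solves uniquely for $S$ without any constraint on $\partial_\sw\M$; the paper obtains that symmetry from the $\sw'''$-part of the $r=1$ equation. Second, \eqref{i} does not follow from~\eqref{j} by antisymmetrization: \eqref{j} involves $\partial_\sw\A$ and $\partial_\sv\M$, whereas \eqref{i} is a closedness condition on $\partial_\sv\A$; in the paper \eqref{i} arises from the $\sw''$-linear, $\sw'$-free part of the $r=0$ equation. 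Third, your own description of $r=0$ as ``the antisymmetric-in-$(ij)$ part'' is inconsistent with extracting the \emph{symmetric} condition~\eqref{v} from it; in the paper \eqref{v} is the $\sw',\sw''$-free symmetric piece of the $r=1$ equation, while \eqref{iv} and \eqref{vi} come from $r=0$. None of these misattributions is fatal to the plan, but if you execute the bookkeeping as written you will find conditions appearing at different levels than you predict, and the claimed implication \eqref{j}$\Rightarrow$\eqref{i} will simply fail.
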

\begin{proof}
Equations~(\ref{matsyuk:2}) become
\begin{align}
    \bp\ddw\wedge\bE &= \Zero,\label{diss8.2}
    \\
    \bp\dw\odot\bE-2D_t\bp\ddw\otimes\bE &= \Zero,\label{diss8.3}
    \\
    2\bp\sw\wedge\bE-3D_t\bp\ddw\otimes\bE+6D_t^2\bp\ddw\otimes	\bE &= \Zero,\label{diss8.4}
    \\
    2\bp\sv\odot\bE-2\bp\sw\otimes\bE+3D_t^2\bp\ddw\otimes
    \bE-4D_t^3\bp\ddw\otimes\bE &= \Zero,\label{diss8.5}
    \\
    2\bp\x\wedge\bE-D_t\bp\sv\wedge\bE+D_t^2\bp\sw\wedge\bE-
    D_t^3\bp\dw\wedge\bE+D_t^4\bp\ddw\wedge\bE &= \Zero.\label{diss8.6}
\end{align}
The second order derivatives of $\bE$ with respect to the variable $\bw''$ appear in~(\ref{diss8.3}) as the coefficients of $\bw'''$, so they should be zero. Together with~(\ref{diss8.2}) this implies the linear dependence,
\begin{equation*}
    \bE=\M\bd\bw''+\m
\end{equation*}
with a symmetric matrix $\M$, which allows us to further decouple \eqref{diss8.3}--\eqref{diss8.6} into separate equations according to the powers of the variable $\bw''$. Next, the terms containing $\bw'''$ in~(\ref{diss8.4}) and $\bw''''$ in~(\ref{diss8.5}) are, respectively,
\begin{align*}
3\bp\dw\otimes\bigl(\M\bd\bw'''\bigr) - 6\bigl(\bw'''\bd\bp\dw\bigr)\M
&= \Zero,
\\
3\bp\dw\otimes\bigl(\M\bd\bw''''\bigr) - 4\bigl(\bw''''\bd\bp\dw\bigr)\M
&= \Zero,
\end{align*}
implying that the matrix $\M$ does not depend on the variable $\bw'$, which simplifies the form of the equations (\ref{diss8.3}), (\ref{diss8.4}), and (\ref{diss8.5}):
\begin{equation}\label{diss8.3.3}
    \bp\dw\otimes\m-2\Dw\M=\,\Zero\,,
\end{equation}
\begin{multline}\label{diss8.4.4}
    2\bp\sw\wedge\m-3\Dw\bp\dw\otimes\m+6\bigl(\bw\bd\bp\x\bigr)\M
    +6\Dx\Dw\M+6\bigl(\bw'\!\bd\bp\sv\bigr)\M
\\
    +6\Dw\bigl(\bw'\!\bd\bp\sv\bigr)\M
    +6\bigl(\bw\bd\bp\sv\bigr)\Dw\M -6\bigl(\bw\bd\bp\x\bigr)\M
    +2\bp\sw\wedge\bigl(\M\bd\bw''\bigr)
\\
    -3\bigl(\bw''\bd\bp\dw\bigr)\bp\dw\otimes\m
    +6\bigl(\bw''\!\bd\bp\sw\bigr)\M +6\bigl(\bw''\!\bd\bp\dw\bigr)\partial_t\M=\,\Zero\,,
\end{multline}
\begin{multline}\label{diss8.5.5}
    -4\bigl(\bw'''\!\bd\bp\sw\bigr)\M
    +3\bigl(\bw''\!\bd\bp\sw\otimes\m\bigr)
    -2\bp\sw\otimes\bigl(\M\bd\bw''\bigr)
    +3\bigl(\bw''\!\bd\bp\dw\bigr)^2\bp\dw\otimes\m
    \\
    \shoveleft{
    -12\Dw\bigl(\bw''\!\bd\bp\sw\bigr)\M +6\Dw\bigl(\bw''\!\bd\bp\dw\bigr)\bp\dw\otimes\m
    }
    +3\bigl(\bw''\!\bd\bp\sw\bigr)\bp\dw\otimes\m
\\
\shoveleft{
    -2\Dw\bp\sw\otimes\bigl(\M\bd\bw''\bigr)
    -2\bigl(\bw''\!\bd\bp\dw\bigr)\bp\sw\otimes\m -4\bigl(\bw''\!\bd\bp\sv\bigr)\M
    +2\bp\sv\otimes\bigl(\M\bd\bw''\bigr)
    }
\\
    -4\Du^3\M +3\Dw^2\bp\dw\otimes\m
    -2\Dw\bp\sw\otimes\m +2\bp\sv\odot\m=\,\Zero
\end{multline}
Careful analysis of these equations implies that $\m$ is a second degree polynomial in $\bw'$,
\begin{equation}\label{matsyuk:m}
\mathsf m_i=\mathsf Q_{ijk}\dw^{j}\dw^{k} + \mathsf N_{ij}\dw^{j} + \mathsf b_i.
\end{equation}
Decompose the matrix $\N$ into the symmetric and skew-symmetric components, $\N=2\PP+\A$. The $\bw'$-independent part of  equation~(\ref{diss8.3.3}) defines the matrix~$\PP$ by the folrmula $\PP=\Du\M$. The $\bw'''$-dependent part of~(\ref{diss8.5.5}) defines the quadratic term in~(\ref{matsyuk:m}),
\begin{equation*}
\Q\bd(\bw'\otimes\bw')=(\bw'\!\bd\bp\sv)\M\bd\bw',
\end{equation*}
and reveals that the matrix $\bp\sw\otimes\M$ is symmetric with respect to the first pair of indices.
Analyzing the $\bw'$-linear terms of~(\ref{diss8.4.4}) and the $\bw''$-linear terms of~(\ref{diss8.5.5}), one concludes that they are equivalent to equation~(\ref{j}).
After some simplifications it turns out that equation~(\ref{diss8.3.3}) is already satisfied, equation~(\ref{diss8.4.4}) coincides with equation~(\ref{ii}), and the $\bw''$-independent and linear in $\bw'$ part of equation~(\ref{diss8.5.5}) coincides with~(\ref{jjj}). The symmetric component of that part of equation~(\ref{diss8.5.5}) that does not contain the variables $\bw'$ and~$\bw''$ coincides with~(\ref{v}), while the skew-symmetric one equals zero according to equation~(\ref{ii}).

The $\bw''$-linear and $\bw'$-independent part of~(\ref{diss8.6})  coincides with~(\ref{i}). Equations \eqref{vi} and~(\ref{iv}) are contained respectively in the part of~(\ref{diss8.6}) that is independent of variables $\bw''$ and~$\bw'$, and in the part dependent on~$\bw''$ and linear in~$\bw'$. The remaining terms in~(\ref{diss8.6}) amount to zero, which finishes the proof.
\end{proof}
\begin{remark}
The differential relation
\begin{equation}\label{diss8.7}
    \lp\sw k(\textup{\ref{v}})_{ij}-(\textup{\ref{iv}})_{kij}+2(\textup{\ref{iv}})_{ikj}-\lp\sw k(\textup{\ref{ii}})_{ij} +2\lp\sv j(\textup{\ref{ii}})_{ik}=0
\end{equation}
between the left-hand sides of equations (\ref{ii}), (\ref{iv}), and~(\ref{v}) holds.
\end{remark}
\begin{remark}
By differentiating equations (\ref{ii}) and~(\ref{j}) with respect to~$\bw$ it is possible to prove that~(\ref{jjj}) is symmetric in the last pair of indexes.
\end{remark}

Actually, the matrices $\M$, $\A$, and the row~$\bb$ are expressed in terms of partial derivatives of the Lagrange function as follows:
\begin{align*}
\sM_{ij} &= \pl\sw i\sw j{}{}\,,\quad \sA_{ij}=\pl\sw i\sv j{}{}-\pl\sw j\sv i{}{},
\\
\Sb_i &=\sw^k\sw^l\pl\sw i\sv k\sv l -\sw^k\pl\sv i\sv k{}{} +2\sw^i\sv^l\pl\sw i\sv k\x l
    +\sw^k\pl\sw i\x k{}{}
\\
& \quad\, +2\sw^k\pl\sw i\sv k{}{}
    +\sv^k\sv^l\pl\sw i\x k\x l -\sv^k\pl\sv i\x k{}{}
    +2\sv^k\pl\sw i\x k t{} +\pl\sw i t{}t{}
    -\pl\sv i t{}{}{} +\pl\x i{}{}{}{}.
\end{align*}
\subsection{The Third  Order Variational ODEs}
\begin{proposition}
\label{prop3order}
A system of the third order ODEs
\begin{equation*}
    \sE_i\left(t,\x^{j},\sv^{j},\sw^{j},\sw'^j\right)=0
\end{equation*}
is a \emph{variational system} if and only if it can be represented as
\begin{equation}\label{matsyuk:Third order shape}
    {\bE}=\A\bd\bw'+\bigl(\bw\bd\bp\sv\bigr)\A\bd\bw+\B\bd\bw+\CC,
\end{equation}
where the skew-symmetric matrix $\A$, the matrix~$\B$ and the row $\CC$ depend on the variables $t$, $\x^{j}$, $\sv^{j}$ and satisfy the following system of PDEs:
\begin{subequations}\label{5}
{\renewcommand{\arraystretch}{1.7}
\begin{align}
\partial_{_{_{_{{\hbox{\sssm v}}}}}}{\!}_{[i}{}{{\sf A}}_{jl]}&=0,
\tag{\theequation i$'$}\label{diss8.i'}
\\
2\,{{\sf B}}_{[ij]}-3\,{\Dx}{\kern.01667em}{{\sf
A}}_{ij}&=0,
\tag{\theequation ii$'$}\label{diss8.ii'}
\\
2\,\partial_{_{_{_{{\hbox{\sssm v}}}}}}{\!}_{[i}{}{{\sf B}}_{j]l}
               -4\,\partial_{_{_{_{{\hbox{\sssm x}}}}}}{\!}_{[i}{}{{\sf
A}}_{j]l}
               +{\partial_{_{_{_{{\hbox{\sssm x}}}}}}{\!}_{l}}{\,}{{\sf A}}_{ij}
             {} +2\,{\Dx}{\kern.01667em}{\partial_{_{_{_{{\hbox{\sssm v}}}}}}{\!}_{l}}{\,}{{\sf
A}}_{ij}&=0,
\tag{\theequation iv$'$}\label{diss8.iv'}
\\
{\partial_{_{_{_{{\hbox{\sssm v}}}}}}{\!}_{(i}}{}{{\sf c}}_{j)}
               -{\Dx}{\kern.01667em}{{\sf B}}_{(ij)}&=0,
               \tag{\theequation v$'$}\label{diss8.v'}
\\
2\,{\partial_{_{_{_{{\hbox{\sssm
v}}}}}}{\!}_{l}}{\,}\partial_{_{_{_{{\hbox{\sssm v}}}}}}{\!}_{[i}{}{{\sf
c}}_{j]}
           -4\,\partial_{_{_{_{{\hbox{\sssm x}}}}}}{\!}_{[i}{}{{\sf B}}_{j]l}
           +{{\Dx}}^{2}{\,}{\partial_{_{_{_{{\hbox{\sssm
v}}}}}}{\!}_{l}}{\,}{{\sf A}}_{ij}
{} +6\,{\Dx}{\kern.0334em}\partial_{_{_{_{{\hbox{\sssm
x}}}}}}{\!}_{[i}{}{{\sf A}}_{jl]}&=0,
\tag{\theequation vi$'$}\label{diss8.vi'}
\\
4\,\partial_{_{_{_{{\hbox{\sssm x}}}}}}{\!}_{[i}{}{{\sf c}}_{j]}
           -2\,{\Dx}{\kern.0334em}\partial_{_{_{_{{\hbox{\sssm
v}}}}}}{\!}_{[i}{}{{\sf c}}_{j]}
           -{{\Dx}}^{3}{\,}{{\sf A}}_{ij}&=0.
           \tag{\theequation vii}\label{diss8.vii}
\end{align}
}
\end{subequations}
\end{proposition}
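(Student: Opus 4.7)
My plan is to derive this third-order criterion as a specialization of the fourth-order Proposition~\ref{prop4order}. A third-order system is precisely a fourth-order system whose components are independent of the second-derivative variables $\bw''$, which in the fourth-order representation $\bE = \M\bd\bw'' + (\bw'\!\bd\bp\sw)\M\bd\bw' + \A\bd\bw' + 2\Du\M\bd\bw' + \bb$ forces $\M = 0$. The shape then simplifies to $\bE = \A\bd\bw' + \bb$, and my task becomes to read off the structural consequences of $\M = 0$ in the compatibility conditions~(\ref{j})--(\ref{vi}).

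With $\M = 0$, equation~(\ref{j}) collapses to $\A$ being independent of $\sw$, and equation~(\ref{jjj}) forces the second $\sw$-derivatives of $\Sb$ to be determined entirely by $\partial_\sv\A$, so $\bb$ is a polynomial of degree at most two in $\sw$. Matching the quadratic coefficient produces the term $(\bw\bd\bp\sv)\A\bd\bw$, and the remainder is affine in $\sw$: $\bb = (\bw\bd\bp\sv)\A\bd\bw + \B\bd\bw + \CC$ with $\B$ and $\CC$ depending on $(t,\x^j,\sv^j)$ alone. This yields the stated shape~(\ref{matsyuk:Third order shape}).

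I next substitute this explicit form into the surviving conditions~(\ref{i}), (\ref{ii}), (\ref{iv}), (\ref{v}), (\ref{vi}), expanding the truncated total derivative $\Du$ on $\sw$-independent tensors (which introduces a $\sw\cdot\bpartial_\sv$ contribution) and sorting by homogeneous powers of $\sw$. I expect~(\ref{i}) to reproduce~(\ref{diss8.i'}); the antisymmetric $\sw$-free part of~(\ref{ii}) to give~(\ref{diss8.ii'}); the $\sw$-linear part of~(\ref{iv}) to give~(\ref{diss8.iv'}); the symmetric $\sw$-free part of~(\ref{v}) to give~(\ref{diss8.v'}); and~(\ref{vi}) to split into its $\sw$-linear part~(\ref{diss8.vi'}) and its $\sw$-free part~(\ref{diss8.vii}). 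The converse direction is immediate: if $\bE$ has the stated form with the primed conditions satisfied, a direct computation confirms~(\ref{j})--(\ref{vi}) with $\M = 0$, so Proposition~\ref{prop4order} guarantees variationality.

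The main obstacle is the combinatorial bookkeeping: I must show that the decomposition by powers of $\sw$ accounts for every term and produces no extra constraints, in particular that higher-order-in-$\sw$ contributions arising from $\Du$ or from the quadratic term cancel identically. The differential identity~(\ref{diss8.7}) among~(\ref{ii}), (\ref{iv}), (\ref{v}) from the fourth-order setting provides confidence that the reduced system is not over-determined, so these extra pieces should follow from the primed equations together with the skew-symmetry of~$\A$.
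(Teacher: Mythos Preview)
Your approach is essentially identical to the paper's: set $\M=\Zero$ in Proposition~\ref{prop4order}, use~(\ref{j}) and~(\ref{jjj}) to obtain the shape~(\ref{matsyuk:Third order shape}), and then reduce the remaining conditions to the primed system. The one point the paper makes explicit that you only anticipate generically is that condition~(\ref{v}) also yields, at the $\bw$-linear level, the extra relation $\lp\sv k\sB_{ij}-\lp\sv{(i}\sB_{j)k}+\Dx\lp\sv{(i}\sA_{j)k}=0$, which is then shown to follow from~(\ref{diss8.ii'}), (\ref{diss8.iv'}), and~(\ref{diss8.i'}) via the identity~(\ref{diss8.7}), exactly as you surmise.
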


\begin{proof}
In Proposition~\ref{prop4order} set $\M=\Zero$. In accordance with  condition~(\ref{j}), the matrix~$\A$ depends only on the variables $t$, $\x^i$, and~$\sv^i$. Condition~(\ref{jjj}) specifies the way the row~$\bb$ depends on the variable~$\bw$:
\begin{equation*}
    \bb=\bigl(\bw\!\bd\bp\sv\bigr)\A\bd\bw+\B\bd\bw+\CC.
\end{equation*}
Conditions (\ref{ii}) and~(\ref{iv}) reduce to the desired form.
Condition~(\ref{v}) produces condition~(\ref{diss8.v'}) together with the relation
\begin{equation}\stepcounter{equation}
\lp\sv k{\sf B}_{ij}-\lp\sv{(i}{\sf B}_{j)k}+\Dx\lp\sv{(i}{\sf A}_{j)k}=0\,.
\label{diss8.iii}\tag{\theequation\,iii}
\end{equation}
Condition~{\rm (\ref{vi})} produces equations (\ref{diss8.vi'}) and~(\ref{diss8.vii}). It turns out, however, that~(\ref{diss8.iii}) follows from (\ref{diss8.ii'}) and~(\ref{diss8.iv'}) according to formula~(\ref{diss8.7}) and when taking in consideration the property~(\ref{diss8.i'}) of the matrix~$\A$.
\end{proof}

Actually, the matrix~$\B$ and the row~$\CC$ are expressed in terms of partial derivatives of the Lagrange function as follows:
\renewcommand{\x}{{\mathsf x}}
\renewcommand{\sv}{{\mathsf v}}
\renewcommand{\sv}{{\mathsf v}}
{\setlength{\multlinegap}{0pt}
\begin{align*}
{\sf B}_{ik}
&=
\sw^l\pl\sv i\sv k\sw l -\pl\sv i\sv k{}{} +2\sv^l\pl\sw i\sv k\x l
\\
& \quad\,
   -\sv^l\pl\sw k\sv i\x l +\pl\sw i\x k{}{} -\pl\sw k\x i{}{}
   +2\pl\sw i\sv k t{}
   -\pl\sw k\sv i t{},
\\
{\sf c}_i
&=
\sw^k\sv^l\pl\sv i\sw k\x l +\sw^k\pl\x i\sw k{}{} +\sw\pl\sv i\sw k t{}
\\
& \quad\,
+\sv^k\sv^l\pl\sw i\x k\x l -\sv^k\pl\sv i\x k{}{} +2\sv^i\pl\sw i\x k t{}
    +\pl\sw i t{}t{} -\pl\sv i t{}{}{} +\pl\x i{}{}{}{}.
\end{align*}}

\subsection{Remarks on the First and Second Order Variational ODEs}
To obtain second-order ODEs, set $\A=\Zero$. Then equations \eqref{diss8.ii'}, \eqref{diss8.iv'}--\eqref{diss8.vii} reduce to an equivalent form of the \emph{Helmholtz conditions}~\cite{Engels}.

To obtain first-order ODE's, additionally set $\B=\Zero$. Equations (\ref{diss8.v'}) and~(\ref{diss8.vi'}) imply linear dependence ${\bE}=\bf\varPsi\bd\V+\bm\psi$ with the skew-symmetric matrix~$\bf\varPsi$ depending only on the variables $\V$ and~$t$. Identity~(\ref{diss8.vii}) becomes to the conditions for $\bm{\sE}$ to be self-adjoint~\cite{Hojman}:
\begin{eqnarray*}
  \lp x{[i}\varPsi_{j]k}+\lp x k\varPsi_{ij} &=&\, 0\,, \\
  \bp\x\wedge\bf\varPsi + \partial_t\bf\varPsi&=&\,\Zero\,.
\end{eqnarray*}

\subsection{${\mathbb E^4}$\textup{:} The No-go Theorem}
\index{theorem! no-go}
Consider the (pseudo) Euclidean group of transformations acting on $M=\mathbb E^4=\mathbb R\times \mathbb E^3$.
\begin{proposition}[\cite{thesis}]\label{NoExist}
There are no invariant variational equations of the third order on the (pseudo) Euclidean space of dimension~$4$.
\end{proposition}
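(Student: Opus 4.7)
My plan is to combine the structural description of third-order variational equations furnished by Proposition~\ref{prop3order} with the Poincar\'e invariance relation~(\ref{7}), applied to the generator~(\ref{matsyuk:Lorentz}). Since $M=\mathbb E^4=\R\times\mathbb E^3$, the spatial fiber $Q=\mathbb E^3$ has dimension three, and the skew-symmetric $\A_{ij}$, the matrix $\B_{ij}$ and the row $\CC_i$ in~(\ref{matsyuk:Third order shape}) are $3\times 3$ tensors.

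First, I would invoke translational invariance, which immediately forces $\A$, $\B$, $\CC$ to be independent of $t$ and $\x^i$, hence to depend on the velocities $\sv^i$ alone. Then I would impose the spatial rotation part of~(\ref{matsyuk:Lorentz}) (the $\W$-terms), which requires $\A$, $\B$, $\CC$ to be built isotropically in $\mathbb E^3$ from the single available vector $\sv$. In three dimensions this pins down $\A_{ij}=\alpha(\sv\bcdot\sv)\,\epsilon_{ijk}\sv^k$, expands $\B_{ij}$ as a combination of $\delta_{ij}$, $\sv_i\sv_j$ and $\epsilon_{ijk}\sv^k$ with scalar coefficients, and reduces $\CC_i=\gamma(\sv\bcdot\sv)\,\sv_i$, leaving only a handful of scalar functions of the single argument $\sv\bcdot\sv$.

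The next step is to feed this rotation-isotropic ansatz into the variationality conditions~(\ref{5}) of Proposition~\ref{prop3order}. Condition~(\ref{diss8.i'}) produces an ODE for $\alpha$; (\ref{diss8.ii'}) ties the skew part of $\B$ to $\A$; (\ref{diss8.v'}) and~(\ref{diss8.iv'}) couple the symmetric part of $\B$ to $\gamma$ and to $\alpha$; and (\ref{diss8.vi'})--(\ref{diss8.vii}) bind whatever is left. I expect this purely algebraic reduction to leave at most one or two free scalar functions, in sharp contrast with the two-dimensional case of Proposition~\ref{matsyuk:E3}, where the rotation subgroup is abelian and leaves genuine room for a one-parameter family governed by $\mu$.

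Finally, I would feed the surviving ansatz into~(\ref{7}) using the Lorentz boost part of~(\ref{matsyuk:Lorentz}). The combination $g_{\SSS0\SSS0}\,\bP\bd\bpartial_{\sv}+(\bP\bcdot\sv)\,\sv\bd\bpartial_{\sv}$ acts nontrivially on $\sv$ and drags the $\epsilon_{ijk}\sv^k$ piece out of the isotropic sector, generating additional linear constraints on the remaining scalar coefficients. Matching the two sides of~(\ref{7}) and collecting coefficients of the linearly independent tensorial structures in $\sv$ and $\sw$, I expect to obtain a strictly overdetermined linear ODE system whose only solution is $\A=\Zero$, $\B=\Zero$, $\CC=\Zero$. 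The hardest step will be the bookkeeping in this last stage: since~(\ref{7}) characterizes invariance only up to the unknown matrix coefficients ${\bm\varPhi}$, ${\bm\varXi}$, ${\bm\varPi}$, one must organize the calculation so as to separate the constraints that genuinely bind $\A$, $\B$, $\CC$ from those that merely determine these auxiliary matrices. It is precisely the two extra independent rotations available when $\dim Q=3$ (compared with the single one available when $\dim Q=2$) that I expect to tip the system into inconsistency and yield the no-go result.
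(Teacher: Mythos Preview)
Your proposal is a reasonable strategy, and the isotropic ansatz $\A_{ij}=\alpha(\V\bcdot\V)\,\epsilon_{ijk}\sv^k$ is indeed the only rotation-covariant skew tensor available. However, what you have written is a programme rather than a proof: the decisive boost step is left at the level of ``I expect an overdetermined system'', and the bookkeeping you flag as hardest is precisely where the argument would have to be carried out in full.

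The paper's proof is altogether different, shorter, and never touches $\B$, $\CC$, or the variationality conditions~(\ref{5}) beyond the skew-symmetry of~$\A$. The key observation is that a skew-symmetric $3\times3$ matrix is singular; its null direction is $\fa=\ast\A$, so $\A\bdot\fa=0$. The invariance relation~(\ref{7}) is split by the differentials $d\V'$, $d\V$, $d\X$, $dt$; only the $d\V'$-component~(\ref{diss11.1}), which involves $\A$ alone, is needed. Contracting~(\ref{diss11.1}) on the right with the column~$\fa$ kills the ${\bf\varPhi}\A$ term outright and yields two vector identities, one in the boost parameter~$\bP$ and one in $\w=\ast\W$. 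Substituting $\bP=\w\times\V$ into the first and combining with the second produces the scalar relation $(\fa\times\w)^{\SSS{\boldsymbol2}}+[\fa\,\V\,\w]^{\SSS2}=0$, valid for all~$\w$. A short signature argument then forces $\fa=0$, hence $\A=0$, so no genuinely third-order equation survives.

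The contrast is instructive: your route parametrizes everything and grinds through the full system, whereas the paper exploits a single structural fact---the automatic kernel of an odd-order skew matrix---to eliminate the Lagrange multiplier~${\bf\varPhi}$ in one stroke and reach a contradiction from the $\A$-sector alone.
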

\begin{proof}
The symmetry condition~(\ref{7}) with the generator~$X$~(\ref{matsyuk:Lorentz}) splits into separate identities arising from equating the coefficients of the differentials $d\V'\!\!$, $d\V$, $d\X$, $dt$:
\begin{gather}
\label{diss11.1}
    \left[\bP\,\bkey.\,\bp\sv
+(\bP\bcdot \V)\,\V\,\bkey.\,\bp\sv+\W\bcdot(\V\wedge\bp\sv\right]\A
+2\,(\bP\bcdot \V)\,\A+
\A\bkey.\,\V\otimes\bP-\A\,\bkey.\,\W=\bf\varPhi\,\bkey.\,\A
\\\nonumber
2\,(\A\bdot\V')\otimes\bP+(\bP\bdot\V')\,\A=\bf\varPi
\\\nonumber
-\K\otimes\bP=\bf\varXi
\\\nonumber
X\K=\bf\varPhi\bdot\K-\bf\varXi\bdot\V-\bf\varPi\bdot\V'
\end{gather}
Any skew-symmetric matrix of order~$3$ is degenerate. Denoting
\begin{equation*}\label{diss12.9}
\fa\overset{\text{def}}
=\ast\,\A\,,
\end{equation*}
one gets $\A\bdot\fa=0$. Building on this property, contract equation~(\ref{diss11.1}) with the column~$\fa$ and then separate the terms that contain $\bP$ and $\w\overset{\mathrm{def}}=\ast\,\W\,$:
\begin{align}\label{diss13.14}
    \fa\times(\bP\bkey.\,\bpartial_{\sv})\;\fa+(\bP\bcdot\V)\,\fa\times(\V\bkey.\,\bpartial_{\sv})\,\fa
    -(\bP\bcdot\fa)\;\fa\times\V&=0,
\\
    \fa\times[\w\,\V\,\bpartial_{\sv}]\fa-\fa\times(\w\times\fa)&=0.
\label{diss13.15}
\end{align}
In equation~(\ref{diss13.14}), set $\bP=\w\times\V$ and use~(\ref{diss13.15}):
\begin{equation*}
    \fa\times(\w\times\fa) - [\w\,\V\,\fa]\,\fa\times\V=0.
\end{equation*}
Now contract with the column~$\w$:
\begin{equation}\label{matsyuk:diss15'}
    (\fa\times\w)^{\SSS{\boldsymbol2}}+[\fa\,\V\,\w]^{\SSS{2}}=0.
\end{equation}
The quantity $(\fa\times\w)^{\SSS{\boldsymbol2}}=\fa^{\SSS{\boldsymbol2}}\w^{\SSS{\boldsymbol2}}-(\fa\bcdot\w)^{\SSS{2}}$
is positive for the metrics with signature~$\pm3$. In the other case, in view of the fact that $(\fa\times\w)\bcdot\w=0$, it is always possible to choose the column~$\w$ in such a way that the vector $\fa\times\w$ does not point in the imaginary direction, $(\fa\times\w)^{\SSS{\boldsymbol2}}\geq0$. Thus, relation~(\ref{matsyuk:diss15'}) implies that $(\fa\times\w)^{\SSS{\boldsymbol2}}=0$. Since
~$\w$ is arbitrary, $\fa=0$ should hold.
\end{proof}
\subsection{The Invariant Euler--Poisson Equation of a Relativistic Two-dimensional
Motion}
Here we present the proof of Proposition~\ref{matsyuk:E3}.
\begin{proof}
Identity~(\ref{7}), with the generator~$X$ defined by~(\ref{matsyuk:Lorentz}), splits into identities obtained by evaluating the
coefficients of the differentials $d t$, $ d\X$,
$d\V$, and $d\V'$ independently (below we denote the associative matrix multiplication by simple juxtaposition):
\begin{align}
{\big(\bP\,\bmat.\,\bpartial_{\sv}+(\bP\bcdot \V)\,\V\,\bmat.\,\bpartial_{\sv}+\W\bcdot(\V\wedge \bpartial_{\sv})\big)\,\A} &
{}+ 2\,(\bP\bcdot \V)\A+(\A\V)\otimes\bP-\A\W &= {\bf\varPhi}\A;
\label{matsyuk:81}
\\
\label{matsyuk:82}
2\,(\A\V')\otimes\bP+(\bP\bcdot \V')\,\A\,&={\bf\varPi};
\\
\label{matsyuk:83}
-{\K}\otimes\bP &= {\bf\varXi};
\\
\label{matsyuk:84}
{\bf\varPhi} {\K}-{\bf\varXi}\V-{\bf\varPi}\V' &=
X({\K}).
\end{align}
As a non-zero skew-symmetric $2 \times 2$ matrix is always invertible,  the `Lagrange multipliers' ${\bf\varPhi}$, ${\bf\varXi}$, and ${\bf\varPi}$ may be obtained explicitly from equations \eqref{matsyuk:81}--\eqref{matsyuk:83} and then
substituted into~(\ref{matsyuk:84}).  Subsequently, equation
(\ref{matsyuk:84}) splits into the following identities by the powers of the
variable $\V'$ and by the parameters $\W$ and $\bP$ (take notice of the
derivative matrix $\A^{\prime}=(\V'\bmat.\,\bpartial_{\sv})\,\A$; also the
vertical arrow points to the only factor to which the aforegoing
differential operator applies):
\begin{align}
\nonumber
&
\lefteqn{\bigl(\W\bcdot(\V\wedge \bpartial_{\sv})\bigr)\,\A^{\prime}\V'+\bigl(\W\bcdot(\V'\wedge \bpartial_{\sv})\bigr)\,\A\V'
     -(\V'\bmat.\,\bpartial_{\sv})\,\A\W\V'}
\\
\label{matsyuk:95}
&
\quad\,
{{}=\bigl(\W\bcdot(\V\wedge \bpartial_{\sv})\bigr)\,\sa
 \A^{-1}\A^{\prime}\V'-\A\W\A^{-1}\A^{\prime}\V'},
\\
&
\label{matsyuk:96}\stack\bigl(\W\bcdot(\V\wedge \bpartial_{\sv})\bigr)\,\B-\B\W
     =\bigl(\W\bcdot(\V\wedge \bpartial_{\sv})\bigr)\,\sa \A^{-1}\B-\A\W\A^{-1}\B,
\\
&
\label{matsyuk:97}\stack\bigl(\W\bcdot(\V\wedge \bpartial_{\sv})\bigr)\,\CC
     =\bigl(\W\bcdot(\V\wedge \bpartial_{\sv})\bigr)\,\sa \A^{-1}\CC-\A\W\A^{-1}\CC,
\\
\nonumber
&
\lefteqn{\stack\bigl(\bP\,\bmat.\,\bpartial_{\sv}+(\bP\bcdot \V)\,\V\,\bmat.\,\bpartial_{\sv}\bigr)\,\A^{\prime}\V'
     +(\bP\bcdot \V)\,\A^{\prime}\V'+(\bP\bcdot \V')\,(\V\,\bmat.\,\bpartial_{\sv})\,\A\V'+(\bP\bcdot \V')\,\A^{\prime}\V}
\\
&
\quad\,
\label{matsyuk:98}{}=\bigl(\bP\,\bmat.\,\bpartial_{\sv}+(\bP\bcdot \V)\,\V\,\bmat.\,\bpartial_{\sv}\bigr)\,\sa \A^{-1}\A^{\prime}\V'
          +(\bP \A^{-1}\A^{\prime}\V')\,\A\V- 3 \,(\bP\bcdot \V')\,\A\V',
\\
\nonumber
&
\lefteqn{\stack\bigl(\bP\,\bmat.\,\bpartial_{\sv}
     +(\bP\bcdot \V)\,\V\,\bmat.\,\bpartial_{\sv}\bigr)\,\B+(\B\V)\otimes\bP}
\\
&\quad\,
\label{matsyuk:99}
        {}=\bigl(\bP\bcdot\bpartial_{\sv}+(\bP\bcdot \V)\,\V\,\bmat.\,\bpartial_{\sv}\bigr)\,\sa \A^{-1}\B
        +(\A\V)\otimes\bP \A^{-1}\B+(\bP\bcdot \V)\B,
\\
\nonumber
&
\lefteqn{\stack\bigl(\bP\,\bmat.\,\bpartial_{\sv}+(\bP\bcdot \V)\,\V\,\bmat.\,\bpartial_{\sv}\bigr)\,\CC}
\\
&\quad\,
\label{matsyuk:90}{}=\bigl(\bP\,\bmat.\,\bpartial_{\sv}+(\bP\bcdot \V)\,\V\,\bmat.\,\bpartial_{\sv}\bigr)\,\sa \A^{-1}\CC
     + 3 \,(\bP\bcdot \V)\,\CC+(\bP \A^{-1}\CC)\,\A\V.
\end{align}
Let
\[
\A=
\begin{pmatrix}
\hphantom{-}
0&\quad &a\\
-a&\quad &0
\end{pmatrix},
\]
so that $\A^{\prime}=\frac{1}{a}\A\,(\V'\bmat.\,\bpartial_{\sv})\,a$. In~(\ref{matsyuk:95}) the terms  $-(\V'\bmat.\,\bpartial_{\sv})\,\A\W\V'$ and $-\A\W\A^{-1}\A^{\prime}\V'$ cancel out (here we notice the first occurrence of a point where the dimension~$2$ plays an important role).
Further splitting by the components of vector~$\V'$ of either the first or the second component of the two-component equation~(\ref{matsyuk:95}) with respect to either the terms $\sv_1\sv_2$ and $(\sv_2)^2$ or $\sv_1\sv_2$ and $(\sv_1)^2$ produces at most the following two independent equations with second order partial derivatives of~$a$ with respect to $\sv_1$ and $\sv_2$:
\begin{subequations}
\renewcommand{\theequation}{\theparentequation.\Roman{equation}}\label{matsyuk:dadpp_from_Omega}
\begin{align}\label{matsyuk:dadpp1_from_Omega}
    \bigl(\sv_1\partial_{\sv_{_2}}\nsp-\sv_2\partial_{\sv_{_1}}\bigr)\,\partial_{\sv_{_1}}a+\partial_{\sv_{_2}}a
    &
    =\frac{\partial_{\sv_{_1}}a }{a}\bigl(\sv_{_1}\partial_{\sv_{_2}}-\sv_2\partial_{\sv_{_1}}\bigr)\,a,
    \\\label{matsyuk:dadpp2_from_Omega}
    \bigl(\sv_1\partial_{\sv_{_2}}\nsp-\sv_2\partial_{\sv_{_1}}\bigr)\,\partial_{\sv_{_2}}a-\partial_{\sv_{_1}}a
    &
    =\frac{\partial_{\sv_{_2}}a}{a}\bigl(\sv_1\partial_{\sv_{_2}}-\sv_2\partial_{\sv_{_1}}\bigr)\,a.
\end{align}
\end{subequations}

In a similar way, the terms $\,(\bP\bcdot \V')\,\A^{\prime}\V\,$ and $\,(\bP \A^{-1}\A^{\prime}\V')\,\A\V\,$ cancel out in equation~(\ref{matsyuk:98}). Again, one may focus on only one component of equation~(\ref{matsyuk:98}), and, after splitting with respect to~$\pi_i\sv_j$, get the following system of independent equations:
\newcommand{\pdp}[1][]{(\V\,\bmat.\,\bpartial_{\sv})\,#1}
\newcommand{\pDp}{\V\,\bmat.\,\bpartial_{\sv}}
\newcommand{\fp}[2]{\partial_{\sv_{_#1}}\negmedspace\frac{\partial_{\sv_{_#2}}a}{a}\,}
\newcommand{\pp}[1]{\partial_{\sv_{_#1}}}
\begin{subequations}\renewcommand{\theequation}
{\theparentequation.\Roman{equation}}\label{matsyuk:dadpp_from_pi}
\begin{align}
\label{matsyuk:dadpp11_from_pi}
 \bigl(\pp1\nsp-\sv_1\pdp\bigr)\,\pp1a -\sv_1\pp1a -\pdp a
    &=\frac{\pp1a}{a}\, \bigl(\pp 1\nsp-\sv_1\pdp\bigr)\,a+3a,
\\
 \label{matsyuk:dadpp22_from_pi}
 \bigl(\partial_{\sv_{_2}}\nsp-\sv_2(\V\,\bmat.\,\bpartial_{\sv})\bigr)\,\partial_{\sv_{_2}}a -\sv_{_2}\partial_{\sv_{_2}}a-(\V\,\bmat.\,\bpartial_{\sv})\,a
     &
   =\frac{\partial_{\sv_{_2}}a}{a}\,\bigl(\partial_{\sv_{_2}}\nsp-\sv_2(\V\,\bmat.\,\bpartial_{\sv})\bigr)\,a+3a,
 \\
 \label{matsyuk:dadpp21_from_pi}
 \bigl(\pp2\nsp-\sv_2\pdp\bigr)\,\pp1a -\sv_2\pp1a
    &=\frac{\pp1a}{a}\, \bigl(\pp2\nsp-\sv_2\pdp\bigr)\,a ,
    \\\label{matsyuk:dadpp12_from_pi}
 \bigl(\pp1\nsp-\sv_1\pdp\bigr)\,\pp2a -\sv_1\pp2a
    &=\frac{\pp2a}{a}\, \bigl(\pp 1\nsp-\sv_1\pdp\bigr)\,a.
\end{align}
\end{subequations}

Next, proceed with solving the system of equations \eqref{matsyuk:dadpp_from_Omega} and \eqref{matsyuk:dadpp_from_pi}. Multiplying~(\ref{matsyuk:dadpp1_from_Omega}) on the left by~$\sv_1$ and adding to~(\ref{matsyuk:dadpp2_from_Omega}) multiplied on the left by~$\sv_2$, we obtain (after some transformations)
\begin{equation}\label{matsyuk:0}
    \bigl(\sv_1\pp2\nsp-\sv_2\pp1\negthickspace\bigr)\pdp a= \frac{\pdp a}{a}\cdot\bigl(\sv_1\pp2\nsp-\sv_2\pp1\negthinspace\bigr)\,a.
\end{equation}
Introduce the notation
\begin{equation}\label{matsyuk:f}
    f = \frac{\pdp a}{a}.
\end{equation}
Equation~(\ref{matsyuk:0}) may be rewritten as
\begin{equation}\label{matsyuk:p^dpf=0}
\bigl(\sv_1\pp2\nsp-\sv_2\pp1\negthinspace\bigr)\,f=0.
\end{equation}
The above condition means that the function~$f$ depends on $\sv_1$ and $\sv_2$ through the variable
\begin{equation}\label{matsyuk:y}
y=\sv_1{}^2+\sv_2{}^2.
\end{equation}

Add (sidewise) equations (\ref{matsyuk:dadpp11_from_pi}) and ~(\ref{matsyuk:dadpp22_from_pi}). After some trasformations, the result reads
\begin{multline*}
    \pp1\pp2a+\pp2\pp1a-\pdp^2a-2\pdp\,a=
    \\
    \frac{1}{a}\bigl[(\pp1a)^2+(\pp2a)^2-(\pDp\; a)^2\bigr]+6a.
\end{multline*}
Using~(\ref{matsyuk:f}), this becomes
\begin{equation}\label{matsyuk:(1)}
   \fp12+\fp21=2f+2yf_y'+6.
\end{equation}

Add equations (\ref{matsyuk:dadpp12_from_pi}) and ~(\ref{matsyuk:dadpp21_from_pi}). Again, after some transformations, the result reads
\begin{multline*}
    2\,\pp1\pp2a-\bigl(\sv_1\pp2\!+\sv_2\pp1\!\bigr)\pdp a=
    \\
    \frac{1}{a}\,\bigl[2\,(\pp1a)\cdot(\pp2a)
    -\bigl(\sv_1\pp2a+\sv_2\pp1a\bigr)\cdot(\pDp\:a)\bigr],
\end{multline*}
and, again, using~(\ref{matsyuk:f}), it becomes
\begin{equation}\label{matsyuk:(2)}
    \fp12\equiv\fp21=2\,\sv_1\sv_2f_y'.
\end{equation}

In a similar manner, subtract~(\ref{matsyuk:dadpp22_from_pi}) from~(\ref{matsyuk:dadpp11_from_pi}):
\begin{multline}
\label{matsyuk:80}
\pp1\pp1a-\pp2\pp2a-\bigl(\sv_1\pp1\nsp-\sv_2\pp2\!\bigr)\pdp a
=
\\
\frac{1}{a}\,\bigl[(\pp1a)^2-(\pp2a)^2
-\bigl(\sv_1\pp1a-\sv_2\pp2a\bigr)\cdot(\pDp\:a)\bigr].
\end{multline}
Using~(\ref{matsyuk:f}) we calculate the derivatives,
\begin{equation*}
   \bigl(\sv_1\pp1\nsp-\sv_2\pp2\!\bigr)\pdp a = f\cdot\bigl(\sv_1\pp1\nsp-\sv_2\pp2\!\bigr)\,a+2\,\,a\bigl(\sv_1{}^2-\sv_2{}^2\bigr)\,f_y',
\end{equation*}
and substitute in \eqref{matsyuk:80} to obtain
\begin{equation}\label{matsyuk:(3)}
    \fp11-\fp22=2\,(\sv_1{}^2-\sv_2{}^2)f_y'.
\end{equation}

Subtracting equation \eqref{matsyuk:dadpp21_from_pi} from \eqref{matsyuk:dadpp12_from_pi}, we obtain
\begin{equation}\label{matsyuk:IV}
\pdp\bigl(\sv_1\pp2\nsp-\sv_2\pp1\!\bigr)\,a=\frac{\pdp a}{a}\cdot\bigl(\sv_1\pp2\nsp-\sv_2\pp1\!\bigr)\,a.
\end{equation}
Let
\begin{equation}\label{matsyuk:zeta}
\zeta=\bigl(\sv_1\pp2\nsp-\sv_2\pp1\!\bigr)\,a\,.
\end{equation}
Equation~(\ref{matsyuk:IV}) means that
\begin{equation}\label{matsyuk:pdp zeta/a=0}
     \pdp{\frac{\zeta}a}=0.
\end{equation}

Now calculate the left hand side of the above equation. Using~(\ref{matsyuk:zeta}), (\ref{matsyuk:(2)}), and (\ref{matsyuk:(3)}) we obtain:
\begin{equation*}
\begin{aligned}
      &\pdp{\frac{\zeta}a}  = \bigl(\sv_1\pp1\negmedspace+\sv_2\pp2\!\bigr)\left(\frac{\sv_1\pp2 a}{a} -\frac{\sv_2\pp1 a}{a}\right)\\
   &\quad= \sv_1{}^2\fp12+\sv_1\frac{\pp2a}{a}+\sv_1\sv_2\,\fp22-\sv_1\sv_2\,\fp11 -\sv_2{}^2\fp21-\sv_2\frac{\pp1a}{a}\\
        &\quad=\sv_1\sv_2\left(\pp2\negmedspace\frac{\pp2a}{a}-\pp1\negmedspace\frac{\pp1a}{a}\right)+2\,\sv_1{}\bigl(\sv_1{}^2-\sv_2{}^2\bigr)\,f_y'+\frac{\zeta}{a}
\\
&\quad=\frac{\zeta}{a}.
\end{aligned}
\end{equation*}
Thus,~(\ref{matsyuk:pdp zeta/a=0}) implies
\begin{equation*}
    \zeta=0\,.
\end{equation*}
This means that the function~$a$ depends on~$\sv_1$ and $\sv_2$ the same way the function~$f$ does, that is, only through the combination~(\ref{matsyuk:y}).

The last step is to utilize equations~(\ref{matsyuk:(1)}) and~(\ref{matsyuk:(2)}) to identify the function~$a$. From~(\ref{matsyuk:f}), one calculates:
\begin{align}\label{matsyuk:a'/a}
\frac{f}{y}&=2\,\frac{a_y'}{a},
\\[2\jot]
\nonumber
\frac{\pp1a}{a}&=\frac{\sv_1f}{y},
\quad
\frac{\pp2a}{a}=\frac{\sv_2f}{y},
\\[2\jot]
\nonumber
\fp11&=\frac{f}{y}+2\,\sv_1{}^2\left(\frac{f_y'}{y}-\frac{f}{y^2}\right),
\quad
\fp22=\frac{f}{y}+2\,\sv_2{}^2\left(\frac{f_y'}{y}-\frac{f}{y^2}\right),
\\[2\jot]
\nonumber
\fp12&=2\,\sv_1{}\sv_2\left(\frac{f_y'}{y}-\frac{f}{y^2}\right).
\end{align}
Next, equations~(\ref{matsyuk:(1)}) and~(\ref{matsyuk:(2)}) become
\begin{equation}\label{matsyuk:(1*)}
    (1-y)f_y'-f=3,
\end{equation}
and
\begin{equation*}
    \frac{f_y'}{y}-f_y'-\frac{f}{y^2}=0.
\end{equation*}
The latter implies
\begin{equation*}
    f=\text{const}\cdot\frac{y}{1-y},
\end{equation*}
and from~(\ref{matsyuk:(1*)}) one concludes that the constant equals~$3$.
Finally, solving the differential equation~(\ref{matsyuk:a'/a}), we get (recall the signature of the space)
\begin{equation}\label{matsyuk:aFinal}
a = \sA_{12}=\frac{\text{const}}{\left(1-\sv_1{}^2-\sv_2{}^2\right)^{3/2}}\equiv\frac{\text{const}}{\left(1+\sv_1\sv^1+\sv_2\sv^2\right)^{3/2}},
\end{equation}
which defines the first term in~(\ref{10}) (or~(\ref{matsyuk:Third order shape})). This completes the first part of the proof.
It is worthwhile to recall that the system of equations (\ref{matsyuk:95})--(\ref{matsyuk:90}) should be solved simultaneously with system~(\ref{5}). But equation~(\ref{diss8.i'}) is satisfied trivially as long as~$\sA_{12}$ is taken from~(\ref{matsyuk:aFinal}).

Next, we identify the matrix~$\B$. Under the assumption of $\B$ being a symmetric matrix (see
(\ref{diss8.ii'})),
the solution of equations \eqref{matsyuk:96} and \eqref{matsyuk:99} is
\[
{\sB}_{ij}={\rm const}\cdot(1+\V\bcdot \V))^{-3/2}\bigl({v}_{i}{v}_{j}
-(1+\V\bcdot \V)\,{g}_{ij}\bigr).
\]
This automatically satisfies equation (\ref{diss8.iv'}) too.
As for the subsystem \eqref{matsyuk:97} and \eqref{matsyuk:90},
only the trivial solution
$\CC={\Zero}$ exists.
\end{proof}

\balance
\printindex
\end{document}